\newtheorem{theorem}{Theorem}
\theoremstyle{plain}
\newtheorem{corollary}{Corollary}
\newtheorem{definition}{Definition}
\newtheorem{lemma}{Lemma}
\newtheorem{proposition}{Proposition}
\newtheorem{remark}{Remark}
\numberwithin{equation}{section}
\begin{document}
\title[Some Companions of Ostrowski type inequality]{Some Companions of
Ostrowski type inequality for $s-$convex and $s-$concave functions with
applications.}
\author{M.Emin \"{O}zdemir$^{\blacklozenge }$}
\address{$^{\blacklozenge }$Atat\"{u}rk University, K.K. Education Faculty,
Department of Mathematics, Erzurum 25240, Turkey}
\email{emos@atauni.edu.tr}
\author{Merve Avci Ardic$^{\bigstar \diamondsuit }$}
\address{$^{\bigstar }$Adiyaman University, Faculty of Science and Arts,
Department of Mathematics, Adiyaman 02040, Turkey}
\email{mavci@posta.adiyaman.edu.tr}
\thanks{$^{\lozenge }$Corresponding Author}
\keywords{$s-$convex function, Ostrowski inequality, H\"{o}lder inequality.}

\begin{abstract}
In this paper, we obtain some companions of Ostrowski type inequality for
absolutely continuous functions whose second derivatives absolute value are $%
s-$convex and $s-$concave. Finally, we gave some applications for special
means.
\end{abstract}

\maketitle

\section{\protect\bigskip introduction}

In \cite{HM}, Hudzik and Maligranda considered among others the class of
functions which are $s-$convex in the second sense. This class is defined in
the following way:

\begin{definition}
\label{def 1.1} A function $f:%
\mathbb{R}
^{+}\rightarrow 
\mathbb{R}
,$ where $%
\mathbb{R}
^{+}=[0,\infty ),$ is said to be $s-$convex in the second sense if%
\begin{equation*}
f(\alpha x+\beta y)\leq \alpha ^{s}f(x)+\beta ^{s}f(y)
\end{equation*}%
for all $x,y\in \lbrack 0,\infty ),$ $\alpha ,\beta \geq 0$ with $\alpha
+\beta =1$ and for some fixed $s\in (0,1].$
\end{definition}

The class of $s-$convex functions in the second sense is usually denoted by $%
K_{s}^{2}.$ It can be easily seen that for $s=1,$ $s-$convexity reduces to
ordinary convexity of functions defined on $[0,\infty ).$

In \cite{DF}, Dragomir and Fitzpatrick proved a variant of Hadamard's
inequality which holds for $s-$convex functions in the second sense:

\begin{theorem}
\label{teo 1.1} Suppose that $f:[0,\infty )\rightarrow \lbrack 0,\infty )$
is an $s-$convex function in the second sense, where $s\in \left( 0,1\right) 
$ and let $a,b\in \lbrack 0,\infty ),$ $a<b.$ If $f\in L^{1}[a,b],$ then the
following inequalities hold:%
\begin{equation}
2^{s-1}f\left( \frac{a+b}{2}\right) \leq \frac{1}{b-a}\int_{a}^{b}f(x)dx\leq 
\frac{f(a)+f(b)}{s+1}.  \label{1.1}
\end{equation}%
The constant $k=\frac{1}{s+1}$ is the best possible in the second inequality
in (\ref{1.1}). The above inequalities are sharp.
\end{theorem}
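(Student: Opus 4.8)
The plan is to establish the two inequalities separately, each via the standard device of parametrizing the segment $[a,b]$ by $x = ta + (1-t)b$ with $t \in [0,1]$ and reducing everything to elementary integrals in $t$; the hypothesis $f \in L^1[a,b]$ guarantees all the integrals below exist.

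For the right-hand inequality, I would write a generic point of $[a,b]$ as $ta + (1-t)b$ and apply Definition \ref{def 1.1} directly with $\alpha = t$, $\beta = 1-t$, obtaining $f(ta + (1-t)b) \le t^s f(a) + (1-t)^s f(b)$. Integrating this pointwise bound over $t \in [0,1]$ and using $\int_0^1 t^s\,dt = \int_0^1 (1-t)^s\,dt = \frac{1}{s+1}$ gives $\int_0^1 f(ta+(1-t)b)\,dt \le \frac{f(a)+f(b)}{s+1}$. The change of variables $x = ta + (1-t)b$ identifies the left side with $\frac{1}{b-a}\int_a^b f(x)\,dx$, which is the desired upper bound.

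For the left-hand inequality the key is to exploit the symmetry of the midpoint. I would pair the points $u = ta + (1-t)b$ and $v = (1-t)a + tb$, whose average is exactly $\frac{a+b}{2}$ for every $t$, and apply $s$-convexity with $\alpha = \beta = \frac{1}{2}$ to get $f\!\left(\frac{a+b}{2}\right) \le 2^{-s}\bigl(f(u) + f(v)\bigr)$ (here the nonnegativity of $f$ keeps the manipulation clean). Integrating in $t$ over $[0,1]$ and noting that both $\int_0^1 f(u)\,dt$ and $\int_0^1 f(v)\,dt$ reduce, again by a linear change of variables, to $\frac{1}{b-a}\int_a^b f(x)\,dx$, produces $f\!\left(\frac{a+b}{2}\right) \le 2^{1-s}\,\frac{1}{b-a}\int_a^b f$, that is $2^{s-1} f\!\left(\frac{a+b}{2}\right) \le \frac{1}{b-a}\int_a^b f(x)\,dx$.

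Finally, for the claim that $k = \frac{1}{s+1}$ is best possible, I would exhibit an extremal function. The natural candidate is $f(x) = x^s$ on $[0,1]$ (with $a=0$, $b=1$): subadditivity of $t \mapsto t^s$ for $s \in (0,1]$ shows $f \in K_s^2$, while $\frac{1}{b-a}\int_0^1 x^s\,dx = \frac{1}{s+1} = \frac{f(0)+f(1)}{s+1}$, so equality is attained and no smaller constant can replace $\frac{1}{s+1}$. The analytic content of the two inequalities is entirely routine; I expect the only mildly delicate point to be bookkeeping the orientation in the change of variables, since $x = ta+(1-t)b$ sends $t=0$ to $b$ and $t=1$ to $a$, which only reverses limits and leaves the averaged integral unchanged.
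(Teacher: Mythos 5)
Your argument is correct and is essentially the classical proof: the paper itself states this theorem without proof, quoting it from the Dragomir--Fitzpatrick reference \cite{DF}, and your two integrations (the pointwise $s$-convexity bound $f(ta+(1-t)b)\le t^{s}f(a)+(1-t)^{s}f(b)$ for the right-hand side, and the midpoint pairing $u=ta+(1-t)b$, $v=(1-t)a+tb$ with $\alpha=\beta=\tfrac12$ for the left-hand side) are exactly the standard route, as is the extremal example $f(x)=x^{s}$ on $[0,1]$ for the optimality of $k=\tfrac1{s+1}$. The only point you leave untouched is the final sentence asserting sharpness of the \emph{first} inequality, for which $f(x)=x^{s}$ does not give equality (it yields $2^{s-1}\cdot 2^{-s}=\tfrac12\neq\tfrac1{s+1}$ for $s<1$); since the paper offers no argument for that clause either, this is a minor omission rather than a defect relative to the source.
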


The following inequality is well known as Ostrowski's inequality in the
literature \cite{O}:

\begin{theorem}
\label{teo 2.0} Let $f:I\subset 
\mathbb{R}
\rightarrow 
\mathbb{R}
$ be a differentiable mapping on $I^{\circ },$ the interior of the interval $%
I,$ such that $f^{\prime }\in L[a,b],$ where $a,b\in I$ with $a<b.$ If $%
\left\vert f^{\prime }(x)\right\vert \leq M,$ then the following inequality,%
\begin{equation*}
\left\vert f(x)-\frac{1}{b-a}\int_{a}^{b}f(x)dx\right\vert \leq M\left(
b-a\right) \left[ \frac{1}{4}+\frac{\left( x-\frac{a+b}{2}\right) ^{2}}{%
\left( b-a\right) ^{2}}\right]
\end{equation*}%
holds for all $x\in \lbrack a,b].$ The constant $\frac{1}{4}$ is the best
possible in the sense that it can not be replaced by a smaller constant.
\end{theorem}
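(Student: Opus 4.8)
The plan is to prove the inequality via the classical Montgomery identity and then to reduce the resulting kernel integral to the quadratic expression on the right-hand side. First I would establish, for $f$ differentiable with $f^{\prime }\in L[a,b]$, the representation
\begin{equation*}
f(x)-\frac{1}{b-a}\int_{a}^{b}f(t)\,dt=\frac{1}{b-a}\int_{a}^{b}p(x,t)\,f^{\prime }(t)\,dt,
\end{equation*}
where the Peano kernel is
\begin{equation*}
p(x,t)=
\begin{cases}
t-a, & a\leq t\leq x, \\
t-b, & x<t\leq b.
\end{cases}
\end{equation*}
This identity follows by splitting the integral at $t=x$ and integrating each piece by parts: $\int_{a}^{x}(t-a)f^{\prime }(t)\,dt=(x-a)f(x)-\int_{a}^{x}f$ and $\int_{x}^{b}(t-b)f^{\prime }(t)\,dt=(b-x)f(x)-\int_{x}^{b}f$, whose sum is $(b-a)f(x)-\int_{a}^{b}f$.

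Next I would take absolute values in the identity, apply the triangle inequality for integrals, and invoke the bound $\left\vert f^{\prime }(t)\right\vert \leq M$ to obtain
\begin{equation*}
\left\vert f(x)-\frac{1}{b-a}\int_{a}^{b}f(t)\,dt\right\vert \leq \frac{M}{b-a}\int_{a}^{b}\left\vert p(x,t)\right\vert \,dt.
\end{equation*}
The kernel satisfies $\left\vert p(x,t)\right\vert =t-a$ on $[a,x]$ and $\left\vert p(x,t)\right\vert =b-t$ on $(x,b]$, so a direct computation gives $\int_{a}^{b}\left\vert p(x,t)\right\vert \,dt=\tfrac{1}{2}(x-a)^{2}+\tfrac{1}{2}(b-x)^{2}$. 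The remaining step is the algebraic identity
\begin{equation*}
\frac{(x-a)^{2}+(b-x)^{2}}{2}=(b-a)^{2}\left[ \frac{1}{4}+\frac{\left( x-\frac{a+b}{2}\right) ^{2}}{(b-a)^{2}}\right] ,
\end{equation*}
which I would verify by writing $u=x-a$, $v=b-x$ (so $u+v=b-a$ and $u-v=2x-a-b$) and using $u^{2}+v^{2}=\tfrac{1}{2}[(u+v)^{2}+(u-v)^{2}]$. Substituting this into the bound and cancelling one factor of $b-a$ yields exactly the claimed inequality.

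The part I expect to require the most care is the sharpness of the constant $\tfrac{1}{4}$. To show it cannot be replaced by a smaller one, I would test the inequality at $x=\frac{a+b}{2}$, where the quadratic term vanishes and the bound reduces to $\frac{M(b-a)}{4}$, using the extremal profile $f(t)=M\left\vert t-\frac{a+b}{2}\right\vert $. For this $f$ one has $f\!\left( \frac{a+b}{2}\right) =0$ and $\frac{1}{b-a}\int_{a}^{b}f(t)\,dt=\frac{M(b-a)}{4}$, so the left-hand side equals $\frac{M(b-a)}{4}$ and equality is attained. The technical wrinkle is that this $f$ fails to be differentiable at the midpoint; I would resolve it either by approximating $\left\vert t-\frac{a+b}{2}\right\vert $ with a sequence of smooth functions whose derivatives stay bounded by $M$ and whose values converge, or by observing that the hypotheses require only $f^{\prime }\in L[a,b]$, so the single point of non-differentiability is harmless. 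This confirms that no constant below $\tfrac{1}{4}$ can hold uniformly, completing the proof.
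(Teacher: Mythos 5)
The paper does not prove this theorem: it is the classical Ostrowski inequality, quoted from the reference [O] as background, so there is no in-paper argument to compare yours against. Your proposal is the standard and correct proof via the Montgomery identity: the integration by parts giving the kernel representation is right, the computation $\int_{a}^{b}\left\vert p(x,t)\right\vert \,dt=\tfrac{1}{2}\left[ (x-a)^{2}+(b-x)^{2}\right] $ is right, and the reduction to $(b-a)^{2}\left[ \tfrac{1}{4}+\left( x-\tfrac{a+b}{2}\right) ^{2}/(b-a)^{2}\right] $ via $u^{2}+v^{2}=\tfrac{1}{2}\left[ (u+v)^{2}+(u-v)^{2}\right] $ is exactly the identity needed. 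The sharpness argument with $f(t)=M\left\vert t-\tfrac{a+b}{2}\right\vert $ tested at $x=\tfrac{a+b}{2}$ is the standard one; you are right that the only point needing care is the non-differentiability at the midpoint, and either of your fixes (mollification with derivative bound $M$ preserved, or noting that a single corner is harmless when only $f^{\prime }\in L[a,b]$ is demanded) closes that gap. No issues.
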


In \cite{SSO}, Set et al. proved some inequalities for $s-$concave and
concave functions via following Lemma:

\begin{lemma}
\label{lem 1.0} Let $f:I\subseteq 
\mathbb{R}
\rightarrow 
\mathbb{R}
$ be a twice differentiable function on $I^{\circ }$ with $f^{\prime \prime
}\in L_{1}[a,b],$ then 
\begin{eqnarray*}
&&\frac{1}{b-a}\int_{a}^{b}f(u)du-f(x)+\left( x-\frac{a+b}{2}\right)
f^{\prime }(x) \\
&=&\frac{\left( x-a\right) ^{3}}{2\left( b-a\right) }\int_{0}^{1}t^{2}f^{%
\prime \prime }\left( tx+\left( 1-t\right) a\right) dt+\frac{\left(
b-x\right) ^{3}}{2\left( b-a\right) }\int_{0}^{1}t^{2}f^{\prime \prime
}\left( tx+\left( 1-t\right) b\right) dt.
\end{eqnarray*}
\end{lemma}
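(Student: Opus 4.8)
The plan is to establish the identity by reversing it: start from the two integrals on the right-hand side, integrate each by parts twice, and show that the boundary terms reassemble into the left-hand side. The core tool is integration by parts applied to $\int_0^1 t^2 f''(tx+(1-t)a)\,dt$, where one treats $t^2$ as the polynomial factor to be differentiated and $f''(tx+(1-t)a)$ as the factor to be integrated. Since $\frac{d}{dt}f'(tx+(1-t)a) = (x-a)f''(tx+(1-t)a)$, an antiderivative of $f''(tx+(1-t)a)$ in the variable $t$ is $\frac{1}{x-a}f'(tx+(1-t)a)$, and this chain-rule bookkeeping (the recurring factor $x-a$, and later $\frac{1}{(x-a)^2}$ from integrating $f$ itself) is what the prefactors $\frac{(x-a)^3}{2(b-a)}$ are designed to cancel.

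First I would compute
\begin{equation*}
\int_0^1 t^2 f''(tx+(1-t)a)\,dt = \frac{1}{x-a}\Bigl[t^2 f'(tx+(1-t)a)\Bigr]_0^1 - \frac{2}{x-a}\int_0^1 t f'(tx+(1-t)a)\,dt,
\end{equation*}
where the boundary term evaluates to $\frac{1}{x-a}f'(x)$ because the $t^2$ factor kills the contribution at $t=0$. Next I would integrate the remaining integral by parts once more, using that an antiderivative of $f'(tx+(1-t)a)$ is $\frac{1}{x-a}f(tx+(1-t)a)$; this produces a boundary term $\frac{1}{x-a}f(x)$ together with $-\frac{1}{x-a}\int_0^1 f(tx+(1-t)a)\,dt$. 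The last integral is converted to an ordinary integral over $[a,x]$ by the substitution $u = tx+(1-t)a$, giving $\frac{1}{(x-a)^2}\int_a^x f(u)\,du$ after accounting for $du=(x-a)\,dt$.

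Multiplying through by $\frac{(x-a)^3}{2(b-a)}$, the first integral contributes $\frac{1}{2(b-a)}\bigl[(x-a)f'(x) - 2(x-a)f(x) + 2\int_a^x f(u)\,du\bigr]$, and by symmetry the second integral (with $a$ replaced by $b$, and $x-a$ replaced by $x-b=-(b-x)$) contributes the analogous expression over $[x,b]$; care must be taken with the sign of $b-x$ since $(b-x)^3$ is positive while the antiderivative factors carry $\frac{1}{x-b}$. Adding the two pieces, the two $\int f$ terms combine to $\frac{1}{b-a}\int_a^b f(u)\,du$, the two $f(x)$ terms add to $-f(x)$, and the two first-derivative terms collapse to $\bigl(x-\tfrac{a+b}{2}\bigr)f'(x)$ after combining $\frac{(x-a)-(b-x)}{2}=x-\tfrac{a+b}{2}$.

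The main obstacle I anticipate is purely bookkeeping rather than conceptual: correctly tracking the signs and the powers of $(x-a)$ versus $(x-b)$ in the second integral, since the substitution there runs "backwards" (the antiderivative factor $\frac{1}{x-b}$ is negative while the cube $(b-x)^3$ is written as a positive prefactor). A clean way to avoid sign errors is to treat both integrals with a common template $\int_0^1 t^2 f''(tx+(1-t)c)\,dt$ for a generic endpoint $c\in\{a,b\}$, derive the reduced form $\frac{(x-c)f'(x)-2(x-c)f(x)+2\int_c^x f(u)\,du}{(x-c)^2}$ once, and only then specialize $c=a$ and $c=b$ before summing with the respective prefactors $\frac{(x-a)^3}{2(b-a)}$ and $\frac{(b-x)^3}{2(b-a)}$.
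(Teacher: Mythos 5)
The paper does not actually prove this lemma at all --- it is quoted verbatim from \cite{SSO} --- so there is no in-paper argument to compare against; your double-integration-by-parts strategy is the standard derivation and it does succeed. One correction before you write it up: your generic reduced form has the wrong powers of $x-c$. Carrying out the two integrations by parts (and the substitution $u=tx+(1-t)c$) actually gives
\begin{equation*}
\int_0^1 t^2 f''\bigl(tx+(1-t)c\bigr)\,dt=\frac{(x-c)^2f'(x)-2(x-c)f(x)+2\int_c^x f(u)\,du}{(x-c)^3},
\end{equation*}
so after multiplying by $\frac{(x-a)^3}{2(b-a)}$ the first-derivative contribution is $\frac{(x-a)^2f'(x)}{2(b-a)}$, not $\frac{(x-a)f'(x)}{2(b-a)}$; the two such terms then combine via $(x-a)^2-(b-x)^2=(b-a)(2x-a-b)$ to give $\left(x-\tfrac{a+b}{2}\right)f'(x)$, exactly as you intend. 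With that power count fixed, the rest of your outline --- the sign handling at $c=b$ where $(b-x)^3=-(x-b)^3$, and the assembly of $\frac{1}{b-a}\int_a^b f(u)\,du$ and $-f(x)$ from the two pieces --- checks out.
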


\begin{theorem}
\label{teo 1.2} Let $f:I\subseteq \lbrack 0,\infty )\rightarrow 
\mathbb{R}
$ be a twice differentiable function on $I^{\circ }$ such that $f^{\prime
\prime }\in L_{1}[a,b]$ where $a,b\in I$ with $a<b.$ If $\left\vert
f^{\prime \prime }\right\vert ^{q}$ is $s-$concave in the second sense on $%
[a,b]$ for some fixed $s\in (0,1],$ $p,q>1$ and $\frac{1}{p}+\frac{1}{q}=1,$
then the following inequality holds:%
\begin{eqnarray}
&&\left\vert \frac{1}{b-a}\int_{a}^{b}f(u)du-f(x)+\left( x-\frac{a+b}{2}%
\right) f^{\prime }(x)\right\vert  \label{1.2} \\
&\leq &\frac{2^{\frac{\left( s-1\right) }{q}}}{\left( 2p+1\right) ^{\frac{1}{%
p}}\left( b-a\right) }\left( \frac{\left( x-a\right) ^{3}\left\vert
f^{\prime \prime }\left( \frac{x+a}{2}\right) \right\vert +\left( b-x\right)
^{3}\left\vert f^{\prime \prime }\left( \frac{x+a}{2}\right) \right\vert }{2}%
\right)  \notag
\end{eqnarray}%
for each $x\in \left[ a,b\right] .$
\end{theorem}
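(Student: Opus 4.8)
The plan is to start from the identity in Lemma \ref{lem 1.0}, take absolute values of both sides, and apply the triangle inequality to the right-hand side. This reduces the problem to bounding the two integrals $\int_{0}^{1}t^{2}\left\vert f^{\prime \prime }(tx+(1-t)a)\right\vert dt$ and $\int_{0}^{1}t^{2}\left\vert f^{\prime \prime }(tx+(1-t)b)\right\vert dt$, each weighted by the prefactor $\frac{(x-a)^{3}}{2(b-a)}$, respectively $\frac{(b-x)^{3}}{2(b-a)}$. To each integral I would apply H\"{o}lder's inequality with exponents $p$ and $q$, keeping the weight $t^{2}$ inside the $p$-factor. This produces $\left( \int_{0}^{1}t^{2p}dt\right) ^{1/p}=(2p+1)^{-1/p}$, which already accounts for the constant $(2p+1)^{-1/p}$ in the claimed bound, leaving the residual $q$-factors $\left( \int_{0}^{1}\left\vert f^{\prime \prime }(tx+(1-t)a)\right\vert ^{q}dt\right) ^{1/q}$ and its counterpart to be estimated.

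The key step is to recast these $q$-factors as averages of $\left\vert f^{\prime \prime }\right\vert ^{q}$ over the subintervals $[a,x]$ and $[x,b]$. Substituting $u=tx+(1-t)a$ gives $\int_{0}^{1}\left\vert f^{\prime \prime }(tx+(1-t)a)\right\vert ^{q}dt=\frac{1}{x-a}\int_{a}^{x}\left\vert f^{\prime \prime }(u)\right\vert ^{q}du$, and likewise $\int_{0}^{1}\left\vert f^{\prime \prime }(tx+(1-t)b)\right\vert ^{q}dt=\frac{1}{b-x}\int_{x}^{b}\left\vert f^{\prime \prime }(u)\right\vert ^{q}du$. Since $\left\vert f^{\prime \prime }\right\vert ^{q}$ is $s$-concave in the second sense, the reverse of the left-hand Hermite--Hadamard inequality in Theorem \ref{teo 1.1} applies on each subinterval, yielding $\frac{1}{x-a}\int_{a}^{x}\left\vert f^{\prime \prime }(u)\right\vert ^{q}du\leq 2^{s-1}\left\vert f^{\prime \prime }\left( \frac{a+x}{2}\right) \right\vert ^{q}$ and the analogous bound on $[x,b]$ with midpoint $\frac{x+b}{2}$. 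Taking $q$-th roots turns $2^{s-1}$ into $2^{(s-1)/q}$, matching the remaining constant in the statement.

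Finally I would reassemble the pieces: restoring the prefactors $\frac{(x-a)^{3}}{2(b-a)}$ and $\frac{(b-x)^{3}}{2(b-a)}$, factoring out the common $\frac{2^{(s-1)/q}}{(2p+1)^{1/p}(b-a)}$ together with the $\frac{1}{2}$, gives exactly the right-hand side of (\ref{1.2}). I note that the two arguments of $f^{\prime \prime }$ should read $\frac{a+x}{2}$ and $\frac{x+b}{2}$ respectively; the printed statement repeats $\frac{x+a}{2}$ in both terms, which appears to be a typographical slip.

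The main obstacle is conceptual rather than computational: one must recognize that the $s$-concavity hypothesis is to be used through the \emph{reversed} Hermite--Hadamard inequality of Theorem \ref{teo 1.1}, applied after the change of variables to the averaged integrand $\left\vert f^{\prime \prime }\right\vert ^{q}$, rather than through any pointwise estimate. The function of H\"{o}lder's inequality is precisely to strip off the weight $t^{2}$ so that the surviving integrand is $\left\vert f^{\prime \prime }\right\vert ^{q}$, to which the concavity-averaging bound directly applies; everything else is elementary.
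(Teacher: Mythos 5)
Your argument is correct and is essentially the standard one: the paper itself does not reprove Theorem \ref{teo 1.2} (it is quoted from \cite{SSO}), but your route --- triangle inequality on the identity of Lemma \ref{lem 1.0}, H\"{o}lder with the weight $t^{2}$ absorbed into the $p$-factor to produce $(2p+1)^{-1/p}$, change of variables, and then the reversed left Hadamard inequality of \eqref{1.1} for the $s$-concave function $\left\vert f^{\prime\prime}\right\vert ^{q}$ to get the factor $2^{(s-1)/q}$ --- is exactly the technique the authors use to prove the analogous Theorem \ref{teo 2.3}. You are also right that the second argument in \eqref{1.2} should read $\frac{x+b}{2}$ rather than $\frac{x+a}{2}$; this is a typographical error in the quoted statement.
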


\begin{corollary}
\label{co 1.1} If in (\ref{1.2}), we choose $x=\frac{a+b}{2},$ then we have 
\begin{equation*}
\left\vert \frac{1}{b-a}\int_{a}^{b}f(u)du-f\left( \frac{a+b}{2}\right)
\right\vert \leq \frac{2^{\frac{\left( s-1\right) }{q}}\left( b-a\right) ^{2}%
}{16\left( 2p+1\right) ^{\frac{1}{p}}}\left[ \left\vert f^{\prime \prime
}\left( \frac{3a+b}{4}\right) \right\vert +\left\vert f^{\prime \prime
}\left( \frac{a+3b}{4}\right) \right\vert \right] .
\end{equation*}%
For instance, if $s=1,$ then we have 
\begin{equation*}
\left\vert \frac{1}{b-a}\int_{a}^{b}f(u)du-f\left( \frac{a+b}{2}\right)
\right\vert \leq \frac{\left( b-a\right) ^{2}}{16\left( 2p+1\right) ^{\frac{1%
}{p}}}\left[ \left\vert f^{\prime \prime }\left( \frac{3a+b}{4}\right)
\right\vert +\left\vert f^{\prime \prime }\left( \frac{a+3b}{4}\right)
\right\vert \right] .
\end{equation*}
\end{corollary}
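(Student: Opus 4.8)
The plan is to obtain the corollary by directly specializing the free point $x$ in inequality (\ref{1.2}) to the midpoint $x=\frac{a+b}{2}$ and then simplifying both sides. The feature that makes this substitution clean is that the left-hand side of (\ref{1.2}) contains the term $\left( x-\frac{a+b}{2}\right) f^{\prime }(x)$, which vanishes identically at $x=\frac{a+b}{2}$. Thus the left-hand side collapses to $\left\vert \frac{1}{b-a}\int_{a}^{b}f(u)\,du-f\left( \frac{a+b}{2}\right) \right\vert$, exactly the quantity appearing in the corollary.

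First I would record the elementary identities at $x=\frac{a+b}{2}$: here $x-a=b-x=\frac{b-a}{2}$, so that $(x-a)^{3}=(b-x)^{3}=\frac{(b-a)^{3}}{8}$. Moreover the two midpoint-type arguments of the second derivative become $\frac{x+a}{2}=\frac{3a+b}{4}$ and $\frac{x+b}{2}=\frac{a+3b}{4}$, the latter argument being inherited, via Lemma \ref{lem 1.0}, from the integral taken toward the endpoint $b$. Substituting these values into the right-hand side of (\ref{1.2}) replaces both $(x-a)^{3}$ and $(b-x)^{3}$ by $\frac{(b-a)^{3}}{8}$ and produces the two absolute values $\left\vert f^{\prime \prime }\left( \frac{3a+b}{4}\right) \right\vert$ and $\left\vert f^{\prime \prime }\left( \frac{a+3b}{4}\right) \right\vert$.

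The only remaining work is to collect the constant. The right-hand side becomes
\[\frac{2^{\frac{s-1}{q}}}{(2p+1)^{\frac{1}{p}}(b-a)}\cdot \frac{1}{2}\cdot \frac{(b-a)^{3}}{8}\left[ \left\vert f^{\prime \prime }\left( \frac{3a+b}{4}\right) \right\vert +\left\vert f^{\prime \prime }\left( \frac{a+3b}{4}\right) \right\vert \right],\]
and since $\frac{1}{b-a}\cdot \frac{1}{2}\cdot \frac{(b-a)^{3}}{8}=\frac{(b-a)^{2}}{16}$, this simplifies precisely to the claimed bound. For the special case I would simply set $s=1$, observe that $2^{\frac{s-1}{q}}=2^{0}=1$, and read off the second displayed inequality. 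I do not anticipate any genuine obstacle here: the argument is a substitution followed by arithmetic, and the only point requiring a moment's care is confirming that the second-derivative argument attached to the $(b-x)^{3}$ term is $\frac{x+b}{2}$ rather than $\frac{x+a}{2}$, which one verifies by tracing back to the $b$-endpoint integral $\int_{0}^{1}t^{2}f^{\prime \prime }\left( tx+(1-t)b\right) dt$ in Lemma \ref{lem 1.0}.
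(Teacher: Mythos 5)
Your proposal is correct and is exactly the intended argument: the paper offers no explicit proof of this corollary (it is quoted from the cited work of Set et al.), and the implicit derivation is precisely the substitution $x=\frac{a+b}{2}$ into (\ref{1.2}) followed by the arithmetic $\frac{1}{b-a}\cdot\frac12\cdot\frac{(b-a)^3}{8}=\frac{(b-a)^2}{16}$ that you carry out. You also rightly notice and resolve the typo in (\ref{1.2}), where the term multiplying $(b-x)^3$ should read $\bigl\vert f''\bigl(\tfrac{x+b}{2}\bigr)\bigr\vert$ rather than $\bigl\vert f''\bigl(\tfrac{x+a}{2}\bigr)\bigr\vert$, as is confirmed both by Lemma \ref{lem 1.0} and by the appearance of $\frac{a+3b}{4}$ in the corollary.
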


In \cite{Z}, Liu introduced some companions of an Ostrowski type inequality
for functions whose first derivative are absolutely continuous.

In this paper, we established some companions of Ostrowski type inequality
for absolutely continuous functions whose second derivatives absolute value
are $s-$convex and $s-$concave which are reduce the results proved in \cite%
{MM}, for $s=1.$

In order to prove our main results we need the following Lemma \cite{Z}:

\begin{lemma}
\label{lem 1.1} Let $f:[a,b]\rightarrow 
\mathbb{R}
$ be such that the derivative $f^{\prime }$ is absolutely continuous on $%
[a,b].$ Then we have the inequality%
\begin{eqnarray*}
&&\frac{1}{b-a}\int_{a}^{b}f(t)dt-\frac{1}{2}\left[ f(x)+f(a+b-x)\right] \\
&&+\frac{1}{2}\left( x-\frac{3a+b}{4}\right) \left[ f^{\prime }(x)-f^{\prime
}(a+b-x)\right] \\
&=&\frac{1}{2\left( b-a\right) }\left[ \int_{a}^{x}\left( t-a\right)
^{2}f^{\prime \prime }(t)dt+\int_{x}^{a+b-x}\left( t-\frac{a+b}{2}\right)
^{2}f^{\prime \prime }(t)dt\right. \\
&&\left. +\int_{a+b-x}^{b}\left( t-b\right) ^{2}f^{\prime \prime }(t)dt 
\right]
\end{eqnarray*}%
for all $x\in \left[ a,\frac{a+b}{2}\right] .$
\end{lemma}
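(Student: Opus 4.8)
The plan is to verify the identity by starting from the right-hand side and integrating by parts twice in each of the three integrals. Since $f^{\prime }$ is absolutely continuous on $[a,b]$, the second derivative $f^{\prime \prime }$ exists almost everywhere and is integrable, and the integration-by-parts formula applies. Each integrand has the form $(t-c)^{2}f^{\prime \prime }(t)$ with $c=a$, $c=\frac{a+b}{2}$, and $c=b$ on the three subintervals respectively, so two integrations by parts will reduce $f^{\prime \prime }$ first to $f^{\prime }$ and then to $f$.

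First I would treat $\int_{a}^{x}(t-a)^{2}f^{\prime \prime }(t)\,dt$. Taking $u=(t-a)^{2}$ and $dv=f^{\prime \prime }(t)\,dt$ annihilates the lower boundary term (because $(t-a)^{2}$ vanishes at $t=a$) and leaves $(x-a)^{2}f^{\prime }(x)-2\int_{a}^{x}(t-a)f^{\prime }(t)\,dt$; a second integration by parts then gives
\begin{equation*}
\int_{a}^{x}(t-a)^{2}f^{\prime \prime }(t)\,dt=(x-a)^{2}f^{\prime }(x)-2(x-a)f(x)+2\int_{a}^{x}f(t)\,dt.
\end{equation*}
The third integral $\int_{a+b-x}^{b}(t-b)^{2}f^{\prime \prime }(t)\,dt$ is handled identically, with the boundary terms now surviving only at the lower endpoint $a+b-x$ since $(t-b)^{2}=0$ at $t=b$. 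For the middle integral I would use $c=\frac{a+b}{2}$ and note that the kernel $(t-\frac{a+b}{2})^{2}$ takes the \emph{same} value $(\frac{a+b}{2}-x)^{2}$ at both endpoints $x$ and $a+b-x$; this symmetry is what produces the antisymmetric combination $f^{\prime }(x)-f^{\prime }(a+b-x)$ together with $f(x)+f(a+b-x)$.

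Then I would add the three resulting expressions. The integral contributions combine into $2\int_{a}^{b}f(t)\,dt$ over the whole interval. Collecting the remaining terms, the coefficients of $f(x)$ and of $f(a+b-x)$ both come out to $-(b-a)$, which after division by $2(b-a)$ yields the term $-\frac{1}{2}[f(x)+f(a+b-x)]$ of the statement.

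The only nontrivial point is the algebraic bookkeeping for the first-derivative terms. The coefficient multiplying $f^{\prime }(x)-f^{\prime }(a+b-x)$ is $(x-a)^{2}-(\frac{a+b}{2}-x)^{2}$, and the crux is to factor it as a difference of squares,
\begin{equation*}
(x-a)^{2}-\Bigl(\tfrac{a+b}{2}-x\Bigr)^{2}=(b-a)\Bigl(x-\tfrac{3a+b}{4}\Bigr),
\end{equation*}
which is exactly what produces the weight $\frac{1}{2}(x-\frac{3a+b}{4})$ after dividing by $2(b-a)$. Once this simplification is in place, dividing the summed expression by $2(b-a)$ reproduces the left-hand side term by term. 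Note that the restriction $x\in[a,\frac{a+b}{2}]$ is not needed for the identity itself; it only guarantees $x\le a+b-x$ so that the middle interval is positively oriented.
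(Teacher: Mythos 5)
Your argument is correct: the double integration by parts on each of the three subintervals, together with the observations that the boundary terms vanish where the kernels $(t-a)^2$ and $(t-b)^2$ are zero, that the middle kernel takes equal values at $x$ and $a+b-x$, and that $(x-a)^{2}-\bigl(\tfrac{a+b}{2}-x\bigr)^{2}=(b-a)\bigl(x-\tfrac{3a+b}{4}\bigr)$, reproduces the identity exactly. The paper itself does not prove this lemma but imports it from the cited reference of Liu, and your computation is precisely the standard verification one would expect there, so there is nothing to add.
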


\section{main results}

\begin{theorem}
\label{teo 2.1} Let $f:[a,b]\rightarrow 
\mathbb{R}
$ be a function such that $f^{\prime }$ is absolutely continuous on $[a,b],$ 
$f^{\prime \prime }\in L_{1}[a,b].$ If $\left\vert f^{\prime \prime
}\right\vert $ is $s-$convex on $[a,b],$ then we have the following
inequality:%
\begin{eqnarray*}
&&\left\vert \frac{1}{b-a}\int_{a}^{b}f(t)dt-\frac{1}{2}\left[ f(x)+f(a+b-x)%
\right] \right. \\
&&\left. +\frac{1}{2}\left( x-\frac{3a+b}{4}\right) \left[ f^{\prime
}(x)-f^{\prime }(a+b-x)\right] \right\vert \\
&\leq &\frac{\left( x-a\right) ^{3}}{\left( s+1\right) \left( s+2\right)
\left( s+3\right) (b-a)}\left[ \left\vert f^{\prime \prime }(a)\right\vert
+\left\vert f^{\prime \prime }(b)\right\vert \right] \\
&&+\frac{4\left( s^{2}+3s+2\right) \left( x-a\right) ^{3}+\left(
s^{2}+s+2\right) \left( a+b-2x\right) ^{3}}{8\left( s+1\right) \left(
s+2\right) \left( s+3\right) (b-a)}\left[ \left\vert f^{\prime \prime
}(x)\right\vert +\left\vert f^{\prime \prime }(a+b-x)\right\vert \right]
\end{eqnarray*}%
for all $x\in \left[ a,\frac{a+b}{2}\right] .$
\end{theorem}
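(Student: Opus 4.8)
The plan is to start from the integral identity in Lemma \ref{lem 1.1}, take the modulus of both sides, and apply the triangle inequality. This reduces the whole problem to estimating the three pieces $I_1=\int_a^x(t-a)^2|f^{\prime\prime}(t)|\,dt$, $I_2=\int_x^{a+b-x}\bigl(t-\tfrac{a+b}{2}\bigr)^2|f^{\prime\prime}(t)|\,dt$, and $I_3=\int_{a+b-x}^b(t-b)^2|f^{\prime\prime}(t)|\,dt$, each carrying the prefactor $\tfrac{1}{2(b-a)}$. The recipe for every piece is identical: write the integration variable as a convex combination of the two endpoints of its subinterval and invoke the $s$-convexity of $|f^{\prime\prime}|$.

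First I would treat $I_1$. Substituting $t=(1-\lambda)a+\lambda x$ gives $dt=(x-a)\,d\lambda$ and $t-a=\lambda(x-a)$, while $s$-convexity yields $|f^{\prime\prime}(t)|\le(1-\lambda)^s|f^{\prime\prime}(a)|+\lambda^s|f^{\prime\prime}(x)|$. Hence $I_1\le(x-a)^3\int_0^1\lambda^2\bigl[(1-\lambda)^s|f^{\prime\prime}(a)|+\lambda^s|f^{\prime\prime}(x)|\bigr]\,d\lambda$, and the two elementary moments are the Beta value $\int_0^1\lambda^2(1-\lambda)^s\,d\lambda=\tfrac{2}{(s+1)(s+2)(s+3)}$ and $\int_0^1\lambda^{s+2}\,d\lambda=\tfrac{1}{s+3}$. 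The term $I_3$ is handled identically after the symmetric substitution $t=(1-\lambda)b+\lambda(a+b-x)$, which produces $|f^{\prime\prime}(b)|$ and $|f^{\prime\prime}(a+b-x)|$ with the same two coefficients.

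The delicate piece is $I_2$, and I expect it to be the main obstacle. Here I would set $t=(1-\lambda)x+\lambda(a+b-x)$, so that $dt=(a+b-2x)\,d\lambda$ and, crucially, $t-\tfrac{a+b}{2}=(a+b-2x)\bigl(\lambda-\tfrac12\bigr)$, giving $I_2\le(a+b-2x)^3\int_0^1(\lambda-\tfrac12)^2\bigl[(1-\lambda)^s|f^{\prime\prime}(x)|+\lambda^s|f^{\prime\prime}(a+b-x)|\bigr]\,d\lambda$. The key computation is the moment $\int_0^1(\lambda-\tfrac12)^2\lambda^s\,d\lambda=\tfrac{1}{s+3}-\tfrac{1}{s+2}+\tfrac{1}{4(s+1)}=\tfrac{s^2+s+2}{4(s+1)(s+2)(s+3)}$; the reflection $\lambda\mapsto1-\lambda$ shows that the companion integral weighted by $(1-\lambda)^s$ equals the same value, so $|f^{\prime\prime}(x)|$ and $|f^{\prime\prime}(a+b-x)|$ enter $I_2$ with identical weight.

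Finally I would multiply the three bounds by $\tfrac{1}{2(b-a)}$ and collect coefficients. The contributions of $|f^{\prime\prime}(a)|$ and $|f^{\prime\prime}(b)|$ come only from $I_1$ and $I_3$ and combine into $\tfrac{(x-a)^3}{(s+1)(s+2)(s+3)(b-a)}\bigl[|f^{\prime\prime}(a)|+|f^{\prime\prime}(b)|\bigr]$. The coefficient of $|f^{\prime\prime}(x)|$ (and, by symmetry, of $|f^{\prime\prime}(a+b-x)|$) gathers the $\tfrac{1}{s+3}$ term from $I_1$ together with the $I_2$ moment; rewriting $\tfrac{1}{s+3}=\tfrac{(s+1)(s+2)}{(s+1)(s+2)(s+3)}$ and clearing the common factor $8$ produces exactly $\tfrac{4(s^2+3s+2)(x-a)^3+(s^2+s+2)(a+b-2x)^3}{8(s+1)(s+2)(s+3)(b-a)}$, using $(s+1)(s+2)=s^2+3s+2$. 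This matches the claimed bound term for term, completing the proof.
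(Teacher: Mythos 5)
Your proposal is correct and follows essentially the same route as the paper: start from Lemma \ref{lem 1.1}, take the modulus, split into the three integrals, apply $s$-convexity of $\left\vert f^{\prime\prime}\right\vert$ on each subinterval with the endpoints of that subinterval, and evaluate the resulting moment integrals (your Beta-function values $\frac{2}{(s+1)(s+2)(s+3)}$, $\frac{1}{s+3}$ and $\frac{s^{2}+s+2}{4(s+1)(s+2)(s+3)}$ are exactly the coefficients appearing in the paper's computation). The only difference is cosmetic: you substitute to the unit interval before integrating, whereas the paper keeps the convex-combination weights in the variable $t$.
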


\begin{proof}
Using Lemma \ref{lem 1.1} and the property of the modulus we have%
\begin{eqnarray*}
&&\left\vert \frac{1}{b-a}\int_{a}^{b}f(t)dt-\frac{1}{2}\left[ f(x)+f(a+b-x)%
\right] \right. \\
&&\left. +\frac{1}{2}\left( x-\frac{3a+b}{4}\right) \left[ f^{\prime
}(x)-f^{\prime }(a+b-x)\right] \right\vert \\
&\leq &\frac{1}{2(b-a)}\left[ \int_{a}^{x}\left( t-a\right) ^{2}\left\vert
f^{\prime \prime }(t)\right\vert dt+\int_{x}^{a+b-x}\left( t-\frac{a+b}{2}%
\right) ^{2}\left\vert f^{\prime \prime }(t)\right\vert dt\right. \\
&&\left. +\int_{a+b-x}^{b}\left( t-b\right) ^{2}\left\vert f^{\prime \prime
}(t)\right\vert dt\right] .
\end{eqnarray*}%
Since $\left\vert f^{\prime \prime }\right\vert $ is $s-$convex on $[a,b],$
we have%
\begin{equation*}
\left\vert f^{\prime \prime }(t)\right\vert \leq \left( \frac{t-a}{x-a}%
\right) ^{s}\left\vert f^{\prime \prime }(x)\right\vert +\left( \frac{x-t}{%
x-a}\right) ^{s}\left\vert f^{\prime \prime }(a)\right\vert ,\text{ \ \ \ \ }%
t\in \lbrack a,x];
\end{equation*}

\begin{equation*}
\left\vert f^{\prime \prime }(t)\right\vert \leq \left( \frac{t-x}{a+b-2x}%
\right) ^{s}\left\vert f^{\prime \prime }(a+b-x)\right\vert +\left( \frac{%
a+b-x-t}{a+b-2x}\right) ^{s}\left\vert f^{\prime \prime }(x)\right\vert ,%
\text{ \ \ \ \ }t\in (x,a+b-x]
\end{equation*}%
and%
\begin{equation*}
\left\vert f^{\prime \prime }(t)\right\vert \leq \left( \frac{t-a-b+x}{x-a}%
\right) ^{s}\left\vert f^{\prime \prime }(b)\right\vert +\left( \frac{b-t}{%
x-a}\right) ^{s}\left\vert f^{\prime \prime }(a+b-x)\right\vert ,\text{ \ \
\ \ }t\in (a+b-x,b].
\end{equation*}%
Therefore we can write%
\begin{eqnarray*}
&&\left\vert \frac{1}{b-a}\int_{a}^{b}f(t)dt-\frac{1}{2}\left[ f(x)+f(a+b-x)%
\right] \right. \\
&&\left. +\frac{1}{2}\left( x-\frac{3a+b}{4}\right) \left[ f^{\prime
}(x)-f^{\prime }(a+b-x)\right] \right\vert \\
&\leq &\frac{1}{2(b-a)}\left\{ \int_{a}^{x}\left( t-a\right) ^{2}\left[
\left( \frac{t-a}{x-a}\right) ^{s}\left\vert f^{\prime \prime
}(x)\right\vert +\left( \frac{x-t}{x-a}\right) ^{s}\left\vert f^{\prime
\prime }(a)\right\vert \right] dt\right. \\
&&\left. +\int_{x}^{a+b-x}\left( t-\frac{a+b}{2}\right) ^{2}\left[ \left( 
\frac{t-x}{a+b-2x}\right) ^{s}\left\vert f^{\prime \prime
}(a+b-x)\right\vert +\left( \frac{a+b-x-t}{a+b-2x}\right) ^{s}\left\vert
f^{\prime \prime }(x)\right\vert \right] dt\right. \\
&&\left. +\int_{a+b-x}^{b}\left( t-b\right) ^{2}\left[ \left( \frac{t-a-b+x}{%
x-a}\right) ^{s}\left\vert f^{\prime \prime }(b)\right\vert +\left( \frac{b-t%
}{x-a}\right) ^{s}\left\vert f^{\prime \prime }(a+b-x)\right\vert \right]
dt\right\} \\
&=&\frac{1}{2(b-a)}\left\{ \frac{1}{\left( 3+s\right) }\left( x-a\right)
^{3}\left\vert f^{\prime \prime }(x)\right\vert +\frac{2}{\left( s+1\right)
\left( s+2\right) \left( s+3\right) }\left( x-a\right) ^{3}\left\vert
f^{\prime \prime }(a)\right\vert \right. \\
&&\left. +\frac{s^{2}+s+2}{4\left( s+1\right) \left( s+2\right) \left(
s+3\right) }\left( a+b-2x\right) ^{3}\left\vert f^{\prime \prime
}(a+b-x)\right\vert +\frac{s^{2}+s+2}{4\left( s+1\right) \left( s+2\right)
\left( s+3\right) }\left( a+b-2x\right) ^{3}\left\vert f^{\prime \prime
}(x)\right\vert \right. \\
&&\left. +\frac{2}{\left( s+1\right) \left( s+2\right) \left( s+3\right) }%
\left( x-a\right) ^{3}\left\vert f^{\prime \prime }(b)\right\vert +\frac{1}{%
\left( 3+s\right) }\left( x-a\right) ^{3}\left\vert f^{\prime \prime
}(a+b-x)\right\vert \right\} \\
&=&\frac{\left( x-a\right) ^{3}}{\left( s+1\right) \left( s+2\right) \left(
s+3\right) (b-a)}\left[ \left\vert f^{\prime \prime }(a)\right\vert
+\left\vert f^{\prime \prime }(b)\right\vert \right] \\
&&+\frac{4\left( s^{2}+3s+2\right) \left( x-a\right) ^{3}+\left(
s^{2}+s+2\right) \left( a+b-2x\right) ^{3}}{8\left( s+1\right) \left(
s+2\right) \left( s+3\right) (b-a)}\left[ \left\vert f^{\prime \prime
}(x)\right\vert +\left\vert f^{\prime \prime }(a+b-x)\right\vert \right] ,
\end{eqnarray*}%
which is the desired result.
\end{proof}

\begin{corollary}
\label{co 2.1} Let $f$ as in Theorem \ref{teo 2.1}. Additionally, if $%
f^{\prime }(x)=f^{\prime }(a+b-x)$, we have%
\begin{eqnarray*}
&&\left\vert \frac{1}{b-a}\int_{a}^{b}f(t)dt-\frac{1}{2}\left[ f(x)+f(a+b-x)%
\right] \right\vert \\
&\leq &\frac{\left( x-a\right) ^{3}}{\left( s+1\right) \left( s+2\right)
\left( s+3\right) (b-a)}\left[ \left\vert f^{\prime \prime }(a)\right\vert
+\left\vert f^{\prime \prime }(b)\right\vert \right] \\
&&+\frac{4\left( s^{2}+3s+2\right) \left( x-a\right) ^{3}+\left(
s^{2}+s+2\right) \left( a+b-2x\right) ^{3}}{8\left( s+1\right) \left(
s+2\right) \left( s+3\right) (b-a)}\left[ \left\vert f^{\prime \prime
}(x)\right\vert +\left\vert f^{\prime \prime }(a+b-x)\right\vert \right] .
\end{eqnarray*}
\end{corollary}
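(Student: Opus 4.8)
The plan is to obtain this corollary as an immediate specialization of Theorem \ref{teo 2.1}, so the work is essentially to observe that one term on the left-hand side collapses under the extra hypothesis. First I would invoke Theorem \ref{teo 2.1} verbatim: since $f$ satisfies exactly the same assumptions (namely $f'$ absolutely continuous on $[a,b]$, $f''\in L_1[a,b]$, and $\left\vert f''\right\vert$ $s$-convex), the stated inequality holds for every $x\in\left[a,\frac{a+b}{2}\right]$, with the full left-hand side carrying the extra term $\frac{1}{2}\left(x-\frac{3a+b}{4}\right)\left[f'(x)-f'(a+b-x)\right]$.

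The key step is then to feed the additional hypothesis $f'(x)=f'(a+b-x)$ into that left-hand side. This forces $f'(x)-f'(a+b-x)=0$, so the entire correction term
\begin{equation*}
\frac{1}{2}\left(x-\frac{3a+b}{4}\right)\left[f'(x)-f'(a+b-x)\right]
\end{equation*}
vanishes identically, regardless of the value of the factor $x-\frac{3a+b}{4}$. Consequently the expression inside the absolute value on the left reduces to $\frac{1}{b-a}\int_a^b f(t)\,dt-\frac{1}{2}\left[f(x)+f(a+b-x)\right]$, which is precisely the quantity appearing on the left of the corollary.

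Finally I would note that the right-hand bound in Theorem \ref{teo 2.1} does not involve $f'$ at all — it is built solely from $\left\vert f''(a)\right\vert$, $\left\vert f''(b)\right\vert$, $\left\vert f''(x)\right\vert$, $\left\vert f''(a+b-x)\right\vert$ and the geometric quantities $(x-a)^3$, $(a+b-2x)^3$, $(b-a)$ — so it is unaffected by imposing $f'(x)=f'(a+b-x)$ and carries over unchanged. Combining the reduced left-hand side with the untouched right-hand side yields exactly the claimed inequality, completing the argument. There is essentially no technical obstacle here: the only thing to verify carefully is that dropping the term is legitimate for all admissible $x$, which it is because the bracket is genuinely zero rather than merely small.
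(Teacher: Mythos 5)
Your proof is correct and matches the paper's (implicit) argument exactly: the corollary is an immediate specialization of Theorem \ref{teo 2.1}, obtained by noting that the hypothesis $f^{\prime}(x)=f^{\prime}(a+b-x)$ annihilates the correction term on the left-hand side while the right-hand side is untouched. Nothing further is needed.
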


\begin{corollary}
\label{co 2.2} In Corollary \ref{co 2.1}, if $f$ is symmetric function, $%
f(a+b-x)=f(x),$ for all $x\in \left[ a,\frac{a+b}{2}\right] $ we have 
\begin{eqnarray*}
&&\left\vert \frac{1}{b-a}\int_{a}^{b}f(t)dt-f(x)\right\vert \\
&\leq &\frac{\left( x-a\right) ^{3}}{\left( s+1\right) \left( s+2\right)
\left( s+3\right) (b-a)}\left[ \left\vert f^{\prime \prime }(a)\right\vert
+\left\vert f^{\prime \prime }(b)\right\vert \right] \\
&&+\frac{4\left( s^{2}+3s+2\right) \left( x-a\right) ^{3}+\left(
s^{2}+s+2\right) \left( a+b-2x\right) ^{3}}{8\left( s+1\right) \left(
s+2\right) \left( s+3\right) (b-a)}\left[ \left\vert f^{\prime \prime
}(x)\right\vert +\left\vert f^{\prime \prime }(a+b-x)\right\vert \right] ,
\end{eqnarray*}%
which is an Ostrowski type inequality.
\end{corollary}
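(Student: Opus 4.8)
The plan is to obtain Corollary \ref{co 2.2} as an immediate specialization of Corollary \ref{co 2.1}, so that no fresh estimation of the three integrals in Lemma \ref{lem 1.1} is required. Recall that Corollary \ref{co 2.1} already operates under the hypothesis $f^{\prime }(x)=f^{\prime }(a+b-x)$, which annihilates the derivative contribution $\frac{1}{2}(x-\frac{3a+b}{4})[f^{\prime }(x)-f^{\prime }(a+b-x)]$ appearing in the identity of Lemma \ref{lem 1.1}, leaving the bound
\[
\left\vert \frac{1}{b-a}\int_{a}^{b}f(t)dt-\frac{1}{2}\left[ f(x)+f(a+b-x)\right] \right\vert \leq R(x),
\]
where $R(x)$ abbreviates the right-hand side of Corollary \ref{co 2.1}, a quantity depending only on $a,b,s$ and the four moduli $\left\vert f^{\prime \prime }(a)\right\vert ,\left\vert f^{\prime \prime }(b)\right\vert ,\left\vert f^{\prime \prime }(x)\right\vert ,\left\vert f^{\prime \prime }(a+b-x)\right\vert $.

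First I would feed in the additional symmetry hypothesis $f(a+b-x)=f(x)$, which holds for every $x\in \lbrack a,\frac{a+b}{2}]$. This collapses the averaged boundary term,
\[
\frac{1}{2}\left[ f(x)+f(a+b-x)\right] =\frac{1}{2}\left[ f(x)+f(x)\right] =f(x),
\]
and substituting this identity into the left-hand member of the displayed inequality replaces the half-sum by $f(x)$ while leaving $R(x)$ untouched. The result is exactly
\[
\left\vert \frac{1}{b-a}\int_{a}^{b}f(t)dt-f(x)\right\vert \leq R(x),
\]
which is the asserted Ostrowski type inequality.

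I expect no genuine obstacle: the entire content is the algebraic simplification of the functional on the left under symmetry, and the bound $R(x)$ is inherited verbatim because it was already written solely in terms of second-derivative moduli at the nodes $a,b,x,a+b-x$. The only point meriting a word of caution is the bookkeeping of hypotheses. Since Corollary \ref{co 2.2} is phrased as an addendum to Corollary \ref{co 2.1}, the standing assumption $f^{\prime }(x)=f^{\prime }(a+b-x)$ is carried along together with the new symmetry condition $f(a+b-x)=f(x)$, and one should simply regard both as part of the layered setup rather than attempt to reconcile them. One could optionally invoke the symmetry to rewrite $\left\vert f^{\prime \prime }(a+b-x)\right\vert $ as $\left\vert f^{\prime \prime }(x)\right\vert $ inside $R(x)$, but this tidying is unnecessary for the stated form, so I would keep the argument to the two displayed lines above.
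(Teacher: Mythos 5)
Your proposal is correct and matches the paper's (implicit) argument exactly: the corollary is obtained by substituting $\frac{1}{2}\left[ f(x)+f(a+b-x)\right] =f(x)$ into the left-hand side of Corollary \ref{co 2.1} while leaving the bound untouched, which is precisely how the paper intends it. Your remark about the layered hypotheses is also well taken, since the symmetry $f(a+b-x)=f(x)$ actually forces $f^{\prime }(a+b-x)=-f^{\prime }(x)$, so combined with the standing assumption $f^{\prime }(x)=f^{\prime }(a+b-x)$ the setup is quite restrictive --- but that is a feature of the paper's statement, not a gap in your proof.
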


\begin{corollary}
\label{co 2.3} In Theorem \ref{teo 2.1}, if we choose
\end{corollary}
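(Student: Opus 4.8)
The plan is to derive this corollary as a direct specialization of Theorem~\ref{teo 2.1}, taking the point $x=\frac{3a+b}{4}$. This is the natural choice: the factor $x-\frac{3a+b}{4}$ standing in front of the derivative difference $\left[f^{\prime}(x)-f^{\prime}(a+b-x)\right]$ on the left-hand side vanishes identically at this value, so the entire first-derivative contribution drops out and what remains is a genuine companion of the Ostrowski inequality involving only the integral mean and the two quarter-points $\frac{3a+b}{4}$ and $\frac{a+3b}{4}$. No assumption on $f^{\prime}$ (such as those in Corollaries~\ref{co 2.1} and~\ref{co 2.2}) is then needed.

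First I would record the elementary data produced by $x=\frac{3a+b}{4}$: one has $x-a=\frac{b-a}{4}$, hence $(x-a)^{3}=\frac{(b-a)^{3}}{64}$; one has $a+b-2x=\frac{b-a}{2}$, hence $(a+b-2x)^{3}=\frac{(b-a)^{3}}{8}$; and the reflected point is $a+b-x=\frac{a+3b}{4}$. Substituting these into the left member of Theorem~\ref{teo 2.1} collapses it to $\left|\frac{1}{b-a}\int_{a}^{b}f(t)\,dt-\frac{1}{2}\left[f\!\left(\frac{3a+b}{4}\right)+f\!\left(\frac{a+3b}{4}\right)\right]\right|$, the derivative term being annihilated.

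Next I would carry the same substitutions through the right-hand side. The first bracket keeps its factor $\left|f^{\prime\prime}(a)\right|+\left|f^{\prime\prime}(b)\right|$ with coefficient reducing to $\frac{(b-a)^{2}}{64(s+1)(s+2)(s+3)}$. For the second bracket, multiplying $\left|f^{\prime\prime}\!\left(\frac{3a+b}{4}\right)\right|+\left|f^{\prime\prime}\!\left(\frac{a+3b}{4}\right)\right|$, the numerator becomes $4(s^{2}+3s+2)\frac{(b-a)^{3}}{64}+(s^{2}+s+2)\frac{(b-a)^{3}}{8}$, which I would collect over the common power $(b-a)^{3}$ to obtain $\frac{(b-a)^{3}}{16}\left(3s^{2}+5s+6\right)$; dividing by $8(s+1)(s+2)(s+3)(b-a)$ then leaves the coefficient $\frac{(3s^{2}+5s+6)(b-a)^{2}}{128(s+1)(s+2)(s+3)}$.

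There is no genuine analytic obstacle here, since every ingredient is already delivered by Theorem~\ref{teo 2.1}; the only step requiring care is the bookkeeping in the second coefficient, in particular verifying, after factoring out $\frac{(b-a)^{3}}{64}$, the collection $4(s^{2}+3s+2)+8(s^{2}+s+2)=12s^{2}+20s+24=4(3s^{2}+5s+6)$. I would close by stating the resulting inequality and, following the pattern of Corollary~\ref{co 1.1}, recording the case $s=1$, where the two coefficients simplify to $\frac{(b-a)^{2}}{1536}$ and $\frac{7(b-a)^{2}}{1536}$, thereby recovering the corresponding companion inequality of \cite{MM}.
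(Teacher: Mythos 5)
Your substitution $x=\tfrac{3a+b}{4}$ is exactly the right mechanism, and your arithmetic checks out against the paper: $(x-a)^3=\tfrac{(b-a)^3}{64}$, $(a+b-2x)^3=\tfrac{(b-a)^3}{8}$, the derivative term is annihilated because $x-\tfrac{3a+b}{4}=0$, and the collection $4(s^2+3s+2)+8(s^2+s+2)=4(3s^2+5s+6)$ reproduces the paper's coefficients $\tfrac{2}{128}=\tfrac{1}{64}$ and $\tfrac{3s^2+5s+6}{128}$ in front of the respective brackets. This is the same (purely computational) route the paper takes, which states the result without writing out the substitution.

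The one gap is coverage rather than correctness: the corollary in the paper actually records \emph{two} specializations of Theorem~\ref{teo 2.1}, namely $x=\tfrac{a+b}{2}$ as well as $x=\tfrac{3a+b}{4}$, and you only treat the second. The midpoint case is equally routine but should be stated: there $a+b-2x=0$ kills the second cube entirely, $a+b-x=x=\tfrac{a+b}{2}$ collapses $\tfrac12[f(x)+f(a+b-x)]$ to $f\!\left(\tfrac{a+b}{2}\right)$ and makes $f'(x)-f'(a+b-x)$ vanish, and $(x-a)^3=\tfrac{(b-a)^3}{8}$ yields the bound $\tfrac{(b-a)^2}{8(s+1)(s+2)(s+3)}\bigl[\lvert f''(a)\rvert+(s^2+3s+2)\bigl\lvert f''\!\left(\tfrac{a+b}{2}\right)\bigr\rvert+\lvert f''(b)\rvert\bigr]$. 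Your added $s=1$ specialization of the quarter-point inequality (coefficients $\tfrac{(b-a)^2}{1536}$ and $\tfrac{7(b-a)^2}{1536}$) is correct but is not part of the paper's statement here.
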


(1) $x=\frac{a+b}{2},$ we have 
\begin{eqnarray*}
&&\left\vert \frac{1}{b-a}\int_{a}^{b}f(t)dt-f\left( \frac{a+b}{2}\right)
\right\vert \\
&\leq &\frac{\left( b-a\right) ^{2}}{8\left( s+1\right) \left( s+2\right)
\left( s+3\right) }\left[ \left\vert f^{\prime \prime }(a)\right\vert
+\left( s^{2}+3s+2\right) \left\vert f^{\prime \prime }\left( \frac{a+b}{2}%
\right) \right\vert +\left\vert f^{\prime \prime }(b)\right\vert \right] .
\end{eqnarray*}

(2) $x=\frac{3a+b}{4},$ we have 
\begin{eqnarray*}
&&\left\vert \frac{1}{b-a}\int_{a}^{b}f(t)dt-\frac{1}{2}\left[ f\left( \frac{%
3a+b}{4}\right) +f\left( \frac{a+3b}{4}\right) \right] \right\vert \\
&\leq &\frac{\left( b-a\right) ^{2}}{128\left( s+1\right) \left( s+2\right)
\left( s+3\right) }\left[ 2\left\vert f^{\prime \prime }(a)\right\vert
+(3s^{2}+5s+6)\left[ \left\vert f^{\prime \prime }\left( \frac{3a+b}{4}%
\right) \right\vert +\left\vert f^{\prime \prime }\left( \frac{a+3b}{4}%
\right) \right\vert \right] +2\left\vert f^{\prime \prime }(b)\right\vert %
\right] .
\end{eqnarray*}

\begin{theorem}
\label{teo 2.2} Let $f:[a,b]\rightarrow 
\mathbb{R}
$ be a function such that $f^{\prime }$ is absolutely continuous on $[a,b],$ 
$f^{\prime \prime }\in L_{1}[a,b].$ If $\left\vert f^{\prime \prime
}\right\vert ^{q}$ is $s-$convex on $[a,b],$ for all $x\in \left[ a,\frac{a+b%
}{2}\right] $ and $q>1,$ then we have the following inequality:%
\begin{eqnarray*}
&&\left\vert \frac{1}{b-a}\int_{a}^{b}f(t)dt-\frac{1}{2}\left[ f(x)+f(a+b-x)%
\right] \right. \\
&&\left. +\frac{1}{2}\left( x-\frac{3a+b}{4}\right) \left[ f^{\prime
}(x)-f^{\prime }(a+b-x)\right] \right\vert \\
&\leq &\frac{1}{2\left( b-a\right) \left( 2p+1\right) ^{\frac{1}{p}}\left(
s+1\right) ^{\frac{1}{q}}}\left[ \left( x-a\right) ^{3}\left( \left\vert
f^{\prime \prime }(a)\right\vert ^{q}+\left\vert f^{\prime \prime
}(x)\right\vert ^{q}\right) ^{\frac{1}{q}}\right. \\
&&\left. +\frac{\left( a+b-2x\right) ^{3}}{4}\left( \left\vert f^{\prime
\prime }(x)\right\vert ^{q}+\left\vert f^{\prime \prime }(a+b-x)\right\vert
^{q}\right) ^{\frac{1}{q}}\right. \\
&&\left. +\left( x-a\right) ^{3}\left( \left\vert f^{\prime \prime
}(a+b-x)\right\vert ^{q}+\left\vert f^{\prime \prime }(b)\right\vert
^{q}\right) ^{\frac{1}{q}}\right] ,
\end{eqnarray*}%
where $\frac{1}{p}+\frac{1}{q}=1.$
\end{theorem}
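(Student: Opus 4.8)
The plan is to proceed exactly as in the proof of Theorem \ref{teo 2.1}: start from the identity in Lemma \ref{lem 1.1}, take the modulus of both sides, and use the triangle inequality to move the absolute value inside each of the three integrals, obtaining
\begin{eqnarray*}
&&\left\vert \frac{1}{b-a}\int_{a}^{b}f(t)dt-\frac{1}{2}\left[ f(x)+f(a+b-x)\right] +\frac{1}{2}\left( x-\frac{3a+b}{4}\right) \left[ f^{\prime }(x)-f^{\prime }(a+b-x)\right] \right\vert \\
&\leq &\frac{1}{2(b-a)}\left[ \int_{a}^{x}(t-a)^{2}\left\vert f^{\prime \prime }(t)\right\vert dt+\int_{x}^{a+b-x}\left( t-\frac{a+b}{2}\right) ^{2}\left\vert f^{\prime \prime }(t)\right\vert dt+\int_{a+b-x}^{b}(t-b)^{2}\left\vert f^{\prime \prime }(t)\right\vert dt\right] .
\end{eqnarray*}
In contrast to Theorem \ref{teo 2.1}, where $s$-convexity of $\left\vert f^{\prime \prime }\right\vert$ is used directly, here I would first split each integrand by H\"{o}lder's inequality and only afterwards invoke $s$-convexity of $\left\vert f^{\prime \prime }\right\vert ^{q}$.

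Next I would apply H\"{o}lder's inequality with exponents $p$ and $q$ to each of the three integrals, separating the power weight from $\left\vert f^{\prime \prime }\right\vert$. On $[a,x]$ this gives $\int_{a}^{x}(t-a)^{2}\left\vert f^{\prime \prime }(t)\right\vert dt\leq \left( \int_{a}^{x}(t-a)^{2p}dt\right) ^{1/p}\left( \int_{a}^{x}\left\vert f^{\prime \prime }(t)\right\vert ^{q}dt\right) ^{1/q}$, where the first factor equals $(x-a)^{(2p+1)/p}/(2p+1)^{1/p}$. To handle the second factor I would use that $\left\vert f^{\prime \prime }\right\vert ^{q}$ is $s$-convex: writing each $t\in[a,x]$ as a convex combination of $a$ and $x$ and integrating the defining inequality, the two resulting integrals $\int_{a}^{x}\left( \frac{t-a}{x-a}\right) ^{s}dt$ and $\int_{a}^{x}\left( \frac{x-t}{x-a}\right) ^{s}dt$ each equal $\frac{x-a}{s+1}$, so that $\int_{a}^{x}\left\vert f^{\prime \prime }(t)\right\vert ^{q}dt\leq \frac{x-a}{s+1}\left( \left\vert f^{\prime \prime }(a)\right\vert ^{q}+\left\vert f^{\prime \prime }(x)\right\vert ^{q}\right)$. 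The integral over $[a+b-x,b]$ has length $x-a$ and is handled identically, producing the analogous bound with endpoints $a+b-x$ and $b$.

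For the middle term the interval $[x,a+b-x]$ is symmetric about $\frac{a+b}{2}$ and has length $a+b-2x$; the power integral then evaluates to $\int_{x}^{a+b-x}\left( t-\frac{a+b}{2}\right) ^{2p}dt=\frac{(a+b-2x)^{2p+1}}{(2p+1)\,2^{2p}}$, whose $1/p$-th power supplies the extra factor $\frac{1}{4}$ together with $(a+b-2x)^{(2p+1)/p}$, while the $s$-convexity step again contributes $\frac{a+b-2x}{s+1}$. The clean collapse of all three bounds rests on the single observation that the exponents of $(x-a)$ (respectively $(a+b-2x)$) add to $\frac{2p+1}{p}+\frac{1}{q}=2+\frac{1}{p}+\frac{1}{q}=3$ because $\frac{1}{p}+\frac{1}{q}=1$, turning each estimate into a cubic factor. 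Substituting the three bounds back, extracting the common constant $\frac{1}{(2p+1)^{1/p}(s+1)^{1/q}}$, and keeping the outer $\frac{1}{2(b-a)}$ yields the asserted inequality. I expect no genuine analytic difficulty here; the main obstacle is purely the bookkeeping of three different intervals and weights, in particular tracking the factor $4$ and the exponent $2^{2p}$ arising from the symmetric middle interval.
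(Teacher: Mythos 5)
Your proposal is correct and follows essentially the same route as the paper's own proof: apply Lemma \ref{lem 1.1} and the modulus, use H\"{o}lder's inequality with exponents $p,q$ on each of the three subintervals, bound $\int\left\vert f^{\prime\prime}\right\vert^{q}$ via the $s$-convexity of $\left\vert f^{\prime\prime}\right\vert^{q}$ to get the factor $\frac{1}{s+1}$, and recombine the exponents $\frac{2p+1}{p}+\frac{1}{q}=3$ into the cubic factors. Your evaluation of the middle power integral as $\frac{(a+b-2x)^{2p+1}}{(2p+1)2^{2p}}$, which yields the extra $\frac{1}{4}$, agrees with the paper's equivalent form $\frac{2}{2p+1}\left(\frac{a+b}{2}-x\right)^{2p+1}$.
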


\begin{proof}
Using Lemma \ref{lem 1.1}, H\"{o}lder inequality and $s-$convexity of $%
\left\vert f^{\prime \prime }\right\vert ^{q},$ we have%
\begin{eqnarray*}
&&\left\vert \frac{1}{b-a}\int_{a}^{b}f(t)dt-\frac{1}{2}\left[ f(x)+f(a+b-x)%
\right] \right. \\
&&\left. +\frac{1}{2}\left( x-\frac{3a+b}{4}\right) \left[ f^{\prime
}(x)-f^{\prime }(a+b-x)\right] \right\vert \\
&\leq &\frac{1}{2\left( b-a\right) }\left\{ \left( \int_{a}^{x}\left(
t-a\right) ^{2p}dt\right) ^{\frac{1}{p}}\left( \int_{a}^{x}\left\vert
f^{\prime \prime }(t)\right\vert ^{q}dt\right) ^{\frac{1}{q}}\right. \\
&&\left. +\left( \int_{x}^{a+b-x}\left( t-\frac{a+b}{2}\right)
^{2p}dt\right) ^{\frac{1}{p}}\left( \int_{x}^{a+b-x}\left\vert f^{\prime
\prime }(t)\right\vert ^{q}dt\right) ^{\frac{1}{q}}\right. \\
&&\left. +\left( \int_{a+b-x}^{b}\left( t-b\right) ^{2p}dt\right) ^{\frac{1}{%
p}}\left( \int_{a+b-x}^{b}\left\vert f^{\prime \prime }(t)\right\vert
^{q}dt\right) ^{\frac{1}{q}}\right\} \\
&\leq &\frac{1}{2\left( b-a\right) }\left\{ \left( \int_{a}^{x}\left(
t-a\right) ^{2p}dt\right) ^{\frac{1}{p}}\left( \int_{a}^{x}\left[ \left( 
\frac{t-a}{x-a}\right) ^{s}\left\vert f^{\prime \prime }(x)\right\vert
^{q}+\left( \frac{x-t}{x-a}\right) ^{s}\left\vert f^{\prime \prime
}(a)\right\vert ^{q}\right] dt\right) ^{\frac{1}{q}}\right. \\
&&\left. +\left( \int_{x}^{a+b-x}\left( t-\frac{a+b}{2}\right)
^{2p}dt\right) ^{\frac{1}{p}}\left( \int_{x}^{a+b-x}\left[ \left( \frac{t-x}{%
a+b-2x}\right) ^{s}\left\vert f^{\prime \prime }(a+b-x)\right\vert
^{q}+\left( \frac{a+b-x-t}{a+b-2x}\right) ^{s}\left\vert f^{\prime \prime
}(x)\right\vert ^{q}\right] dt\right) ^{\frac{1}{q}}\right. \\
&&\left. +\left( \int_{a+b-x}^{b}\left( t-b\right) ^{2p}dt\right) ^{\frac{1}{%
p}}\left( \int_{a+b-x}^{b}\left[ \left( \frac{t-a-b+x}{x-a}\right)
^{s}\left\vert f^{\prime \prime }(b)\right\vert ^{q}+\left( \frac{b-t}{x-a}%
\right) ^{s}\left\vert f^{\prime \prime }(a+b-x)\right\vert ^{q}\right]
dt\right) ^{\frac{1}{q}}\right\} \\
&=&\frac{1}{2\left( b-a\right) }\left\{ \left( \frac{\left( x-a\right)
^{2p+1}}{\left( 2p+1\right) }\right) ^{\frac{1}{p}}\left( \frac{x-a}{s+1}%
\right) ^{\frac{1}{q}}\left( \left\vert f^{\prime \prime }(a)\right\vert
^{q}+\left\vert f^{\prime \prime }(x)\right\vert ^{q}\right) ^{\frac{1}{q}%
}\right. \\
&&\left. +\left( \frac{2}{2p+1}\left( \frac{a+b}{2}-x\right) ^{2p+1}\right)
^{\frac{1}{p}}\left( \frac{a+b-2x}{s+1}\right) ^{\frac{1}{q}}\left(
\left\vert f^{\prime \prime }(x)\right\vert ^{q}+\left\vert f^{\prime \prime
}(a+b-x)\right\vert ^{q}\right) ^{\frac{1}{q}}\right. \\
&&\left. +\left( \frac{\left( x-a\right) ^{2p+1}}{\left( 2p+1\right) }%
\right) ^{\frac{1}{p}}\left( \frac{x-a}{s+1}\right) ^{\frac{1}{q}}\left(
\left\vert f^{\prime \prime }(a+b-x)\right\vert ^{q}+\left\vert f^{\prime
\prime }(b)\right\vert ^{q}\right) ^{\frac{1}{q}}\right\} .
\end{eqnarray*}%
When we arrange the statements above, we obtain the desired result.
\end{proof}

\begin{corollary}
\label{co 2.5} Let $f$ as in Theorem \ref{teo 2.2}. Additionally, if $%
f^{\prime }(x)=f^{\prime }(a+b-x)$, we have%
\begin{eqnarray*}
&&\left\vert \frac{1}{b-a}\int_{a}^{b}f(t)dt-\frac{1}{2}\left[ f(x)+f(a+b-x)%
\right] \right\vert \\
&\leq &\frac{1}{2\left( b-a\right) \left( 2p+1\right) ^{\frac{1}{p}}\left(
s+1\right) ^{\frac{1}{q}}}\left[ \left( x-a\right) ^{3}\left( \left\vert
f^{\prime \prime }(a)\right\vert ^{q}+\left\vert f^{\prime \prime
}(x)\right\vert ^{q}\right) ^{\frac{1}{q}}\right. \\
&&\left. +\frac{\left( a+b-2x\right) ^{3}}{4}\left( \left\vert f^{\prime
\prime }(x)\right\vert ^{q}+\left\vert f^{\prime \prime }(a+b-x)\right\vert
^{q}\right) ^{\frac{1}{q}}\right. \\
&&\left. +\left( x-a\right) ^{3}\left( \left\vert f^{\prime \prime
}(a+b-x)\right\vert ^{q}+\left\vert f^{\prime \prime }(b)\right\vert
^{q}\right) ^{\frac{1}{q}}\right] ,
\end{eqnarray*}%
for all $x\in \left[ a,\frac{a+b}{2}\right] .$
\end{corollary}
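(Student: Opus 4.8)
The plan is to obtain Corollary \ref{co 2.5} as a direct specialization of Theorem \ref{teo 2.2}, exactly mirroring the passage from Theorem \ref{teo 2.1} to Corollary \ref{co 2.1}. The hypotheses of the corollary are those of the theorem together with the extra symmetry requirement $f^{\prime }(x)=f^{\prime }(a+b-x)$; since the conclusion of Theorem \ref{teo 2.2} already applies verbatim, no new estimation is needed and the argument reduces to inspecting how the two sides of the theorem's inequality simplify under this hypothesis.

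First I would apply Theorem \ref{teo 2.2} at the given point $x\in \left[ a,\frac{a+b}{2}\right] $, recording the full inequality. The left-hand side there is
\begin{equation*}
\left\vert \frac{1}{b-a}\int_{a}^{b}f(t)dt-\frac{1}{2}\left[ f(x)+f(a+b-x)\right] +\frac{1}{2}\left( x-\frac{3a+b}{4}\right) \left[ f^{\prime }(x)-f^{\prime }(a+b-x)\right] \right\vert .
\end{equation*}
The key observation is that the hypothesis $f^{\prime }(x)=f^{\prime }(a+b-x)$ forces the factor $f^{\prime }(x)-f^{\prime }(a+b-x)$ to vanish, so the entire term $\frac{1}{2}\left( x-\frac{3a+b}{4}\right) \left[ f^{\prime }(x)-f^{\prime }(a+b-x)\right] $ is zero regardless of the value of the weight $x-\frac{3a+b}{4}$. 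Consequently the left-hand side collapses to $\left\vert \frac{1}{b-a}\int_{a}^{b}f(t)dt-\frac{1}{2}\left[ f(x)+f(a+b-x)\right] \right\vert $, which is precisely the left-hand side claimed in the corollary.

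It then remains to check that the right-hand side passes through unchanged, and this is immediate: the bound in Theorem \ref{teo 2.2} is expressed entirely in terms of $\left\vert f^{\prime \prime }\right\vert ^{q}$ evaluated at $a$, $x$, $a+b-x$, and $b$, and involves no values of $f$ or $f^{\prime }$ themselves. Hence the symmetry hypothesis does not affect the bounding side, and the right-hand side is copied verbatim. Combining the simplified left-hand side with this unchanged right-hand side yields the asserted inequality for all $x\in \left[ a,\frac{a+b}{2}\right] $. There is essentially no genuine obstacle here; the only point meriting care is confirming that the estimate of Theorem \ref{teo 2.2} is independent of the first-order data, so that annihilating the derivative-difference term costs nothing on the bounding side.
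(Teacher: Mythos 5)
Your proposal is correct and is exactly the argument the paper intends (the paper states Corollary \ref{co 2.5} without proof precisely because it is this immediate specialization): the hypothesis $f^{\prime }(x)=f^{\prime }(a+b-x)$ annihilates the derivative-difference term on the left-hand side of Theorem \ref{teo 2.2}, while the right-hand side, depending only on values of $\left\vert f^{\prime \prime }\right\vert$, is unaffected. Nothing further is needed.
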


\begin{corollary}
\label{co 2.6} In Corollary \ref{co 2.5}, if $f$ is symmetric function, $%
f(a+b-x)=f(x),$ we have 
\begin{eqnarray*}
&&\left\vert \frac{1}{b-a}\int_{a}^{b}f(t)dt-f(x)\right\vert \\
&\leq &\frac{1}{2\left( b-a\right) \left( 2p+1\right) ^{\frac{1}{p}}\left(
s+1\right) ^{\frac{1}{q}}}\left[ \left( x-a\right) ^{3}\left( \left\vert
f^{\prime \prime }(a)\right\vert ^{q}+\left\vert f^{\prime \prime
}(x)\right\vert ^{q}\right) ^{\frac{1}{q}}\right. \\
&&\left. +\frac{\left( a+b-2x\right) ^{3}}{4}\left( \left\vert f^{\prime
\prime }(x)\right\vert ^{q}+\left\vert f^{\prime \prime }(a+b-x)\right\vert
^{q}\right) ^{\frac{1}{q}}\right. \\
&&\left. +\left( x-a\right) ^{3}\left( \left\vert f^{\prime \prime
}(a+b-x)\right\vert ^{q}+\left\vert f^{\prime \prime }(b)\right\vert
^{q}\right) ^{\frac{1}{q}}\right] ,
\end{eqnarray*}%
for all $x\in \left[ a,\frac{a+b}{2}\right] .$
\end{corollary}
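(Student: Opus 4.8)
The plan is to obtain this statement as an immediate specialization of Corollary~\ref{co 2.5}, so that essentially no new estimation is required. First I would recall that Corollary~\ref{co 2.5} already furnishes the bound
\[
\left\vert \frac{1}{b-a}\int_{a}^{b}f(t)dt-\frac{1}{2}\left[ f(x)+f(a+b-x)\right] \right\vert \leq R(x),
\]
where $R(x)$ denotes the three-term H\"{o}lder expression on the right-hand side, built only from the values of $\left\vert f^{\prime \prime }\right\vert$ at the points $a$, $x$, $a+b-x$, $b$ together with the geometric factors $(x-a)^{3}$ and $(a+b-2x)^{3}$. The standing hypotheses of Corollary~\ref{co 2.5} (absolute continuity of $f^{\prime }$, $s$-convexity of $\left\vert f^{\prime \prime }\right\vert ^{q}$, and $f^{\prime }(x)=f^{\prime }(a+b-x)$) are inherited, so the only task is to simplify the left-hand side under the additional symmetry assumption.

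The key step is the elementary identity coming from $f(a+b-x)=f(x)$: under this hypothesis the average collapses,
\[
\frac{1}{2}\left[ f(x)+f(a+b-x)\right] =\frac{1}{2}\left[ f(x)+f(x)\right] =f(x),
\]
so the left-hand side of Corollary~\ref{co 2.5} becomes exactly $\left\vert \frac{1}{b-a}\int_{a}^{b}f(t)dt-f(x)\right\vert$, which is the quantity on the left of the claimed inequality. Since the symmetry condition constrains only $f$ itself and not its second derivative, the bound $R(x)$ is untouched and carries over verbatim; combining the two observations yields the stated inequality for all $x\in \left[ a,\frac{a+b}{2}\right] $.

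The one point that deserves care---and the only conceivable obstacle---is the mutual consistency of the two hypotheses now in force. Differentiating the symmetry relation $f(a+b-x)=f(x)$ gives $-f^{\prime }(a+b-x)=f^{\prime }(x)$, i.e. $f^{\prime }(x)+f^{\prime }(a+b-x)=0$, whereas Corollary~\ref{co 2.5} assumes $f^{\prime }(x)=f^{\prime }(a+b-x)$; together these force $f^{\prime }(x)=0$ on the relevant range. I would therefore present this corollary purely as a formal consequence of the inequality of Corollary~\ref{co 2.5} after the substitution $f(a+b-x)=f(x)$, rather than claiming a broad class of functions meeting both conditions simultaneously. With that understanding the derivation is a single line, and no computation beyond the displayed identity is needed.
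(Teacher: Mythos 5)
Your derivation is correct and matches the paper's (implicit) argument: the corollary is an immediate specialization of Corollary~\ref{co 2.5}, with the symmetry hypothesis $f(a+b-x)=f(x)$ collapsing the average $\frac{1}{2}\left[ f(x)+f(a+b-x)\right]$ to $f(x)$ while the right-hand side carries over unchanged. Your side remark that the two hypotheses together force $f^{\prime }(x)=0$ is a fair observation about how restrictive the combined assumptions are, but it does not affect the formal validity of the deduction.
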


\begin{corollary}
\label{co 2.7} In Theorem \ref{teo 2.2}, if we choose
\end{corollary}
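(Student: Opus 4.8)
The plan is to prove each part by direct substitution of the prescribed value of $x$ into the inequality of Theorem \ref{teo 2.2}, followed by elementary simplification of both sides; no new analytic input is required, since the estimate is inherited verbatim from that theorem. First I would record the elementary quantities that control the bound. For the choice $x=\frac{a+b}{2}$ one has $x-a=\frac{b-a}{2}$, hence $\left( x-a\right) ^{3}=\frac{(b-a)^{3}}{8}$, together with $a+b-2x=0$ and $a+b-x=\frac{a+b}{2}=x$. For the choice $x=\frac{3a+b}{4}$ one has $x-a=\frac{b-a}{4}$, hence $\left( x-a\right) ^{3}=\frac{(b-a)^{3}}{64}$, while $a+b-2x=\frac{b-a}{2}$ gives $\left( a+b-2x\right) ^{3}=\frac{(b-a)^{3}}{8}$, and $a+b-x=\frac{a+3b}{4}$.

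The key structural observation is that the derivative term on the left-hand side drops out in both cases, although for different reasons. When $x=\frac{a+b}{2}$ we have $a+b-x=x$, so the bracket $f^{\prime }(x)-f^{\prime }(a+b-x)$ is identically zero; when $x=\frac{3a+b}{4}$ it is instead the coefficient $x-\frac{3a+b}{4}$ that vanishes. In either case the left-hand side of Theorem \ref{teo 2.2} collapses to $\left\vert \frac{1}{b-a}\int_{a}^{b}f(t)dt-\frac{1}{2}\left[ f(x)+f(a+b-x)\right] \right\vert$, which becomes $\left\vert \frac{1}{b-a}\int_{a}^{b}f(t)dt-f\left( \frac{a+b}{2}\right) \right\vert$ in the first case and $\left\vert \frac{1}{b-a}\int_{a}^{b}f(t)dt-\frac{1}{2}\left[ f\left( \frac{3a+b}{4}\right) +f\left( \frac{a+3b}{4}\right) \right] \right\vert$ in the second.

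It then remains to substitute the above values into the right-hand side of Theorem \ref{teo 2.2} and collect the common prefactor $\frac{1}{2(b-a)\left( 2p+1\right) ^{\frac{1}{p}}\left( s+1\right) ^{\frac{1}{q}}}$. For $x=\frac{a+b}{2}$ the middle summand carries the factor $\left( a+b-2x\right) ^{3}=0$ and therefore vanishes, so the three-term bound reduces to the two remaining terms, each with factor $\frac{(b-a)^{3}}{8}$; pulling this out produces the constant $\frac{(b-a)^{2}}{16\left( 2p+1\right) ^{\frac{1}{p}}\left( s+1\right) ^{\frac{1}{q}}}$. For $x=\frac{3a+b}{4}$ all three summands survive, with the outer terms carrying $\left( x-a\right) ^{3}=\frac{(b-a)^{3}}{64}$ and the middle term carrying $\frac{1}{4}\left( a+b-2x\right) ^{3}=\frac{(b-a)^{3}}{32}$; factoring out $\frac{(b-a)^{3}}{64}$ yields the weights $1,2,1$ on the three $\left( \cdot \right) ^{\frac{1}{q}}$ blocks and the constant $\frac{(b-a)^{2}}{128\left( 2p+1\right) ^{\frac{1}{p}}\left( s+1\right) ^{\frac{1}{q}}}$.

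The only point requiring real attention is this bookkeeping of the powers of $(b-a)$ and of the numerical coefficients $\tfrac{1}{8},\tfrac{1}{32},\tfrac{1}{64}$, so that the prefactor $\tfrac{1}{2(b-a)}$ combines correctly with $\left( x-a\right) ^{3}$ and $\tfrac{1}{4}\left( a+b-2x\right) ^{3}$ to give a clean power $(b-a)^{2}$ and the stated denominators $16$ and $128$. I expect no genuine analytic obstacle: both parts are pure specializations, and the vanishing of the middle term in the first case (versus its persistence in the second) is the only qualitative feature to flag explicitly.
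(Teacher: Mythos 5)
Your proof is correct and coincides with the paper's (implicit) argument: both parts are pure specializations of Theorem \ref{teo 2.2}, and your bookkeeping of $(x-a)^{3}$, $\tfrac{1}{4}(a+b-2x)^{3}$ and the prefactor $\tfrac{1}{2(b-a)}$ reproduces the stated constants $16$ and $128$ and the $1,2,1$ weights exactly. Incidentally, your computation shows that the paper's displayed bound in case (1) is missing a ``$+$'' between the two $(\cdot)^{\frac{1}{q}}$ terms, which is evidently a typo.
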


(1) $x=\frac{a+b}{2},$ we have%
\begin{eqnarray*}
&&\left\vert \frac{1}{b-a}\int_{a}^{b}f(t)dt-f\left( \frac{a+b}{2}\right)
\right\vert \\
&\leq &\frac{\left( b-a\right) ^{2}}{\left( s+1\right) ^{\frac{1}{q}}\left(
2p+1\right) ^{\frac{1}{p}}16}\left[ \left( \left\vert f^{\prime \prime
}(a)\right\vert ^{q}+\left\vert f^{\prime \prime }\left( \frac{a+b}{2}%
\right) \right\vert ^{q}\right) ^{\frac{1}{q}}\right. \\
&&\left. \left( \left\vert f^{\prime \prime }\left( \frac{a+b}{2}\right)
\right\vert ^{q}+\left\vert f^{\prime \prime }(b)\right\vert ^{q}\right) ^{%
\frac{1}{q}}\right] .
\end{eqnarray*}

(2) $x=\frac{3a+b}{4},$ we have%
\begin{eqnarray*}
&&\left\vert \frac{1}{b-a}\int_{a}^{b}f(t)dt-\frac{1}{2}\left[ f\left( \frac{%
3a+b}{4}\right) +f\left( \frac{a+3b}{4}\right) \right] \right\vert \\
&\leq &\frac{\left( b-a\right) ^{2}}{128\left( 2p+1\right) ^{\frac{1}{p}%
}\left( s+1\right) ^{\frac{1}{q}}}\left\{ \left( \left\vert f^{\prime \prime
}(a)\right\vert ^{q}+\left\vert f^{\prime \prime }\left( \frac{3a+b}{4}%
\right) \right\vert ^{q}\right) ^{\frac{1}{q}}\right. \\
&&\left. +2\left( \left\vert f^{\prime \prime }\left( \frac{3a+b}{4}\right)
\right\vert ^{q}+\left\vert f^{\prime \prime }\left( \frac{a+3b}{4}\right)
\right\vert ^{q}\right) ^{\frac{1}{q}}\right. \\
&&\left. +\left( \left\vert f^{\prime \prime }\left( \frac{a+3b}{4}\right)
\right\vert ^{q}+\left\vert f^{\prime \prime }(b)\right\vert ^{q}\right) ^{%
\frac{1}{q}}\right\} .
\end{eqnarray*}

\begin{corollary}
\label{co 2.8} In Corollary \ref{co 2.5}, if we choose $x=a$ we have%
\begin{equation*}
\left\vert \frac{1}{b-a}\int_{a}^{b}f(t)dt-\frac{f(a)+f(b)}{2}\right\vert
\leq \frac{\left( b-a\right) ^{2}}{8\left( 2p+1\right) ^{\frac{1}{p}}\left(
s+1\right) ^{\frac{1}{q}}}\left[ \left\vert f^{\prime \prime }(a)\right\vert
^{q}+\left\vert f^{\prime \prime }(b)\right\vert ^{q}\right] ^{\frac{1}{q}}.
\end{equation*}
\end{corollary}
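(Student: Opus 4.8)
The plan is to obtain this inequality by a direct specialization of Corollary \ref{co 2.5} to the endpoint $x=a$, so that the proof reduces to substitution and arithmetic simplification rather than to any fresh estimation. First I would note that $x=a$ lies in the admissible range $\left[a,\frac{a+b}{2}\right]$, and that the standing hypothesis of Corollary \ref{co 2.5}, namely $f^{\prime}(x)=f^{\prime}(a+b-x)$, becomes $f^{\prime}(a)=f^{\prime}(b)$ at this point; this is precisely the condition that allows the derivative term to be discarded from the left-hand side.

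Next I would evaluate the left-hand side at $x=a$. Since $f(x)=f(a)$ and $f(a+b-x)=f(b)$, the averaged boundary term $\frac{1}{2}\left[f(x)+f(a+b-x)\right]$ collapses to $\frac{f(a)+f(b)}{2}$, which is exactly the quantity appearing in the claimed bound.

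Then I would simplify the right-hand side of Corollary \ref{co 2.5}. The key observation is that its first and third summands each carry a factor $(x-a)^{3}$, which vanishes at $x=a$, whereas the middle summand carries $(a+b-2x)^{3}$, which at $x=a$ equals $(b-a)^{3}$. Moreover $f^{\prime\prime}(x)=f^{\prime\prime}(a)$ and $f^{\prime\prime}(a+b-x)=f^{\prime\prime}(b)$, so the surviving bracket is $\left(\left\vert f^{\prime\prime}(a)\right\vert^{q}+\left\vert f^{\prime\prime}(b)\right\vert^{q}\right)^{1/q}$. Combining the prefactor $\frac{1}{2(b-a)(2p+1)^{1/p}(s+1)^{1/q}}$ with $\frac{(b-a)^{3}}{4}$ then yields the constant $\frac{(b-a)^{2}}{8(2p+1)^{1/p}(s+1)^{1/q}}$ of the statement.

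Because every step is an exact evaluation rather than a bound, there is no genuine analytic obstacle here; the only point requiring care is bookkeeping---checking that the two outer summands really vanish and that the powers of $(b-a)$ together with the numerical factors combine correctly to give $8$ in the denominator. In particular, one should confirm that no residual dependence on $x$ survives once the two outer subintervals $[a,x]$ and $(a+b-x,b]$ degenerate to points.
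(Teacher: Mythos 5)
Your proposal is correct and matches the paper's (implicit) argument exactly: Corollary \ref{co 2.8} is obtained by substituting $x=a$ into Corollary \ref{co 2.5}, where the two $(x-a)^{3}$ terms vanish and the middle term contributes $\frac{(b-a)^{3}}{4}\left(\left\vert f^{\prime\prime}(a)\right\vert^{q}+\left\vert f^{\prime\prime}(b)\right\vert^{q}\right)^{1/q}$, giving the stated constant. Your additional observation that the hypothesis $f^{\prime}(x)=f^{\prime}(a+b-x)$ reduces to $f^{\prime}(a)=f^{\prime}(b)$ at this endpoint, which is what removes the derivative term, is a correct and worthwhile point of care.
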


\begin{remark}
\label{rem 2.1} Using the well-known power-mean integral inequality one may
get inequalities for functions whose second derivatives absolute value are
convex. The details are omitted.
\end{remark}

We obtain the following result for $s-$concave functions.

\begin{theorem}
\label{teo 2.3} Let $f:[a,b]\rightarrow 
\mathbb{R}
$ be a function such that $f^{\prime }$ is absolutely continuous on $[a,b],$ 
$f^{\prime \prime }\in L_{1}[a,b].$ If $\left\vert f^{\prime \prime
}\right\vert ^{q}$ is $s-$concave on $[a,b],$ for all $x\in \left[ a,\frac{%
a+b}{2}\right] $ and $q>1,$ then we have the following inequality:%
\begin{eqnarray*}
&&\left\vert \frac{1}{b-a}\int_{a}^{b}f(t)dt-\frac{1}{2}\left[ f(x)+f(a+b-x)%
\right] \right. \\
&&\left. +\frac{1}{2}\left( x-\frac{3a+b}{4}\right) \left[ f^{\prime
}(x)-f^{\prime }(a+b-x)\right] \right\vert \\
&\leq &\frac{2^{\frac{s-1}{q}}}{2\left( b-a\right) \left( 2p+1\right) ^{%
\frac{1}{p}}}\left[ \left( x-a\right) ^{3}\left\vert f^{\prime \prime
}\left( \frac{x+a}{2}\right) \right\vert \right. \\
&&\left. +\frac{\left( a+b-2x\right) ^{3}}{4}\left\vert f^{\prime \prime
}\left( \frac{a+b}{2}\right) \right\vert +\left( x-a\right) ^{3}\left\vert
f^{\prime \prime }\left( \frac{a+2b-x}{2}\right) \right\vert \right]
\end{eqnarray*}%
where $\frac{1}{p}+\frac{1}{q}=1.$
\end{theorem}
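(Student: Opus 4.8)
The plan is to follow the three-term decomposition used in the proof of Theorem~\ref{teo 2.2}, replacing the $s$-convex pointwise bound on $\left\vert f^{\prime\prime}\right\vert^{q}$ by the midpoint (Hermite--Hadamard type) estimate that $s$-concavity provides. First I would apply Lemma~\ref{lem 1.1} together with the triangle inequality to bound the left-hand side by
\[
\frac{1}{2(b-a)}\left[ \int_{a}^{x}(t-a)^{2}\left\vert f^{\prime\prime}(t)\right\vert dt+\int_{x}^{a+b-x}\left( t-\tfrac{a+b}{2}\right) ^{2}\left\vert f^{\prime\prime}(t)\right\vert dt+\int_{a+b-x}^{b}(t-b)^{2}\left\vert f^{\prime\prime}(t)\right\vert dt\right].
\]
To each of the three integrals I would apply H\"{o}lder's inequality with exponents $p$ and $q$, separating the polynomial weight from $\left\vert f^{\prime\prime}\right\vert$.

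The polynomial factors are elementary: $\left( \int_{a}^{x}(t-a)^{2p}dt\right) ^{1/p}=\left( (x-a)^{2p+1}/(2p+1)\right) ^{1/p}$, the integral over $[a+b-x,b]$ gives the same value, and the symmetric middle integral yields $\left( \tfrac{2}{2p+1}\left( \tfrac{a+b-2x}{2}\right) ^{2p+1}\right) ^{1/p}$. For the $\left\vert f^{\prime\prime}\right\vert ^{q}$ factors I would use that $\left\vert f^{\prime\prime}\right\vert ^{q}$ is $s$-concave, so the \emph{reverse} of the first inequality in (\ref{1.1}) holds on every subinterval: for a nonnegative $s$-concave $g$ on $[\alpha,\beta]$ one has $\frac{1}{\beta-\alpha}\int_{\alpha}^{\beta}g\leq 2^{s-1}g\!\left( \tfrac{\alpha+\beta}{2}\right) $. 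Applying this on $[a,x]$, $[x,a+b-x]$ and $[a+b-x,b]$ produces precisely the three midpoint values $\tfrac{a+x}{2}$, $\tfrac{a+b}{2}$ and $\tfrac{a+2b-x}{2}$, each acquiring a factor $2^{(s-1)/q}$ after the $q$-th root, together with $(x-a)^{1/q}$, $(a+b-2x)^{1/q}$ and $(x-a)^{1/q}$ (the lengths of the subintervals).

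The remaining work is bookkeeping of exponents. In each term the polynomial factor contributes the power $(2p+1)/p$ and the $L^{q}$ factor the power $1/q$ of the relevant length, and since $\tfrac{2p+1}{p}+\tfrac{1}{q}=2+\tfrac{1}{p}+\tfrac{1}{q}=3$ these collapse to a clean cube, giving $(x-a)^{3}$, $(a+b-2x)^{3}$ and $(x-a)^{3}$. The one point needing care is the middle term, where the constant $\left( \tfrac{2}{2p+1}\right) ^{1/p}$ times $2^{-(2p+1)/p}$ simplifies through $2^{1/p-(2p+1)/p}=2^{-2}$ to the factor $\tfrac14$ that stands in front of $(a+b-2x)^{3}$. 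I expect this constant collapse, and the correct pairing of each subinterval with its own midpoint, to be the main (purely computational) obstacle; once $\frac{2^{(s-1)/q}}{2(b-a)(2p+1)^{1/p}}$ is factored out, the asserted inequality drops out.
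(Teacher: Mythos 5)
Your proposal is correct and follows essentially the same route as the paper: Lemma~\ref{lem 1.1} plus the triangle inequality, H\"{o}lder's inequality on each of the three subintervals, and then the reversed first inequality of (\ref{1.1}) for the $s$-concave function $\left\vert f^{\prime\prime}\right\vert^{q}$ on $[a,x]$, $[x,a+b-x]$ and $[a+b-x,b]$, yielding the midpoints $\tfrac{a+x}{2}$, $\tfrac{a+b}{2}$, $\tfrac{a+2b-x}{2}$ with the factor $2^{(s-1)/q}$. Your exponent bookkeeping, including the collapse $2^{1/p-(2p+1)/p}=2^{-2}$ in the middle term, matches the paper's computation.
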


\begin{proof}
From Lemma \ref{lem 1.1} and using H\"{o}lder inequality, we have%
\begin{eqnarray*}
&&\left\vert \frac{1}{b-a}\int_{a}^{b}f(t)dt-\frac{1}{2}\left[ f(x)+f(a+b-x)%
\right] \right. \\
&&\left. +\frac{1}{2}\left( x-\frac{3a+b}{4}\right) \left[ f^{\prime
}(x)-f^{\prime }(a+b-x)\right] \right\vert \\
&\leq &\frac{1}{2\left( b-a\right) }\left\{ \left( \int_{a}^{x}\left(
t-a\right) ^{2p}dt\right) ^{\frac{1}{p}}\left( \int_{a}^{x}\left\vert
f^{\prime \prime }(t)\right\vert ^{q}dt\right) ^{\frac{1}{q}}\right. \\
&&\left. +\left( \int_{x}^{a+b-x}\left( t-\frac{a+b}{2}\right)
^{2p}dt\right) ^{\frac{1}{p}}\left( \int_{x}^{a+b-x}\left\vert f^{\prime
\prime }(t)\right\vert ^{q}dt\right) ^{\frac{1}{q}}\right. \\
&&\left. +\left( \int_{a+b-x}^{b}\left( t-b\right) ^{2p}dt\right) ^{\frac{1}{%
p}}\left( \int_{a+b-x}^{b}\left\vert f^{\prime \prime }(t)\right\vert
^{q}dt\right) ^{\frac{1}{q}}\right\} \\
&=&\frac{1}{2\left( b-a\right) }\left\{ \left( \int_{a}^{x}\left( t-a\right)
^{2p}dt\right) ^{\frac{1}{p}}\left( \int_{a}^{x}\left\vert f^{\prime \prime
}\left( \frac{t-a}{x-a}x+\frac{x-t}{x-a}a\right) \right\vert ^{q}dt\right) ^{%
\frac{1}{q}}\right. \\
&&\left. +\left( \int_{x}^{a+b-x}\left( t-\frac{a+b}{2}\right)
^{2p}dt\right) ^{\frac{1}{p}}\left( \int_{x}^{a+b-x}\left\vert f^{\prime
\prime }\left( \frac{t-x}{a+b-2x}\left( a+b-x\right) +\frac{a+b-x-t}{a+b-2x}%
x\right) \right\vert ^{q}dt\right) ^{\frac{1}{q}}\right. \\
&&\left. +\left( \int_{a+b-x}^{b}\left( t-b\right) ^{2p}dt\right) ^{\frac{1}{%
p}}\left( \int_{a+b-x}^{b}\left\vert f^{\prime \prime }\left( \frac{t-a-b+x}{%
x-a}b+\frac{b-t}{x-a}\left( a+b-x\right) \right) \right\vert ^{q}dt\right) ^{%
\frac{1}{q}}\right\} .
\end{eqnarray*}%
Since $\left\vert f^{\prime \prime }\right\vert ^{q}$ is $s-$concave, from (%
\ref{1.1}), we have 
\begin{equation*}
\int_{a}^{x}\left\vert f^{\prime \prime }\left( \frac{t-a}{x-a}x+\frac{x-t}{%
x-a}a\right) \right\vert ^{q}dt\leq 2^{s-1}\left( x-a\right) \left\vert
f^{\prime \prime }\left( \frac{a+x}{2}\right) \right\vert ^{q},
\end{equation*}%
\begin{equation*}
\int_{x}^{a+b-x}\left\vert f^{\prime \prime }\left( \frac{t-x}{a+b-2x}\left(
a+b-x\right) +\frac{a+b-x-t}{a+b-2x}x\right) \right\vert ^{q}dt\leq
2^{s-1}\left( a+b-2x\right) \left\vert f^{\prime \prime }\left( \frac{a+b}{2}%
\right) \right\vert ^{q}
\end{equation*}%
and%
\begin{equation*}
\int_{a+b-x}^{b}\left\vert f^{\prime \prime }\left( \frac{t-a-b+x}{x-a}b+%
\frac{b-t}{x-a}\left( a+b-x\right) \right) \right\vert ^{q}dt\leq
2^{s-1}\left( x-a\right) \left\vert f^{\prime \prime }\left( \frac{a+2b-x}{2}%
\right) \right\vert ^{q}.
\end{equation*}%
Combining all above inequalities, we obtain%
\begin{eqnarray*}
&&\left\vert \frac{1}{b-a}\int_{a}^{b}f(t)dt-\frac{1}{2}\left[ f(x)+f(a+b-x)%
\right] \right. \\
&&\left. +\frac{1}{2}\left( x-\frac{3a+b}{4}\right) \left[ f^{\prime
}(x)-f^{\prime }(a+b-x)\right] \right\vert \\
&\leq &\frac{1}{2\left( b-a\right) }\left\{ \left( \frac{\left( x-a\right)
^{2p+1}}{\left( 2p+1\right) }\right) ^{\frac{1}{p}}2^{\frac{s-1}{q}}\left(
x-a\right) ^{\frac{1}{q}}\left\vert f^{\prime \prime }\left( \frac{x+a}{2}%
\right) \right\vert \right. \\
&&\left. +\left( \frac{2}{2p+1}\left( \frac{a+b}{2}-x\right) ^{2p+1}\right)
^{\frac{1}{p}}2^{\frac{s-1}{q}}\left( a+b-2x\right) ^{\frac{1}{q}}\left\vert
f^{\prime \prime }\left( \frac{a+b}{2}\right) \right\vert \right. \\
&&\left. +\left( \frac{\left( x-a\right) ^{2p+1}}{\left( 2p+1\right) }%
\right) ^{\frac{1}{p}}2^{\frac{s-1}{q}}\left( x-a\right) ^{\frac{1}{q}%
}\left\vert f^{\prime \prime }\left( \frac{a+2b-x}{2}\right) \right\vert
\right\} \\
&\leq &\frac{2^{\frac{s-1}{q}}}{2\left( b-a\right) \left( 2p+1\right) ^{%
\frac{1}{p}}}\left[ \left( x-a\right) ^{3}\left\vert f^{\prime \prime
}\left( \frac{x+a}{2}\right) \right\vert \right. \\
&&\left. +\frac{\left( a+b-2x\right) ^{3}}{4}\left\vert f^{\prime \prime
}\left( \frac{a+b}{2}\right) \right\vert +\left( x-a\right) ^{3}\left\vert
f^{\prime \prime }\left( \frac{a+2b-x}{2}\right) \right\vert \right]
\end{eqnarray*}%
for all $x\in \left[ a,\frac{a+b}{2}\right] $ and $\frac{1}{p}+\frac{1}{q}%
=1. $
\end{proof}

\begin{corollary}
\label{co 2.9} Let $f$ as in Theorem \ref{teo 2.3}. Additionally, if $%
f^{\prime }(x)=f^{\prime }(a+b-x)$, we have%
\begin{eqnarray*}
&&\left\vert \frac{1}{b-a}\int_{a}^{b}f(t)dt-\frac{1}{2}\left[ f(x)+f(a+b-x)%
\right] \right\vert \\
&\leq &\frac{2^{\frac{s-1}{q}}}{2\left( b-a\right) \left( 2p+1\right) ^{%
\frac{1}{p}}}\left[ \left( x-a\right) ^{3}\left( \left\vert f^{\prime \prime
}\left( \frac{x+a}{2}\right) \right\vert +\left\vert f^{\prime \prime
}\left( \frac{a+2b-x}{2}\right) \right\vert \right) \right. \\
&&\left. +\frac{\left( a+b-2x\right) ^{3}}{4}\left\vert f^{\prime \prime
}\left( \frac{a+b}{2}\right) \right\vert \right] ,
\end{eqnarray*}%
for all $x\in \left[ a,\frac{a+b}{2}\right] .$
\end{corollary}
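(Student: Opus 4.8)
The plan is to obtain this statement as an immediate specialization of Theorem \ref{teo 2.3}, so no new estimation is required; only the bookkeeping of the extra hypothesis matters. First I would write down the conclusion of Theorem \ref{teo 2.3} exactly as stated, which is legitimate because the hypotheses of Corollary \ref{co 2.9} ($f^{\prime}$ absolutely continuous, $f^{\prime\prime}\in L_{1}[a,b]$, $\left\vert f^{\prime\prime}\right\vert^{q}$ $s$-concave, $q>1$) are precisely those of the theorem.

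Next I would use the supplementary assumption $f^{\prime}(x)=f^{\prime}(a+b-x)$ to simplify the left-hand side. The quantity inside the absolute value in Theorem \ref{teo 2.3} contains the summand
\begin{equation*}
\frac{1}{2}\left( x-\frac{3a+b}{4}\right) \left[ f^{\prime }(x)-f^{\prime }(a+b-x)\right],
\end{equation*}
and under the hypothesis the bracket $f^{\prime }(x)-f^{\prime }(a+b-x)$ vanishes identically, so this entire summand is zero for every $x\in\left[a,\frac{a+b}{2}\right]$. Consequently the left-hand side collapses to
\begin{equation*}
\left\vert \frac{1}{b-a}\int_{a}^{b}f(t)dt-\frac{1}{2}\left[ f(x)+f(a+b-x)\right] \right\vert,
\end{equation*}
which is exactly the expression appearing in the claim.

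Finally I would check that the right-hand side is unchanged. The bound in Theorem \ref{teo 2.3} carries two summands sharing the common factor $(x-a)^{3}$, namely those involving $\left\vert f^{\prime\prime}\left(\frac{x+a}{2}\right)\right\vert$ and $\left\vert f^{\prime\prime}\left(\frac{a+2b-x}{2}\right)\right\vert$; collecting these into $(x-a)^{3}\left(\left\vert f^{\prime\prime}\left(\frac{x+a}{2}\right)\right\vert+\left\vert f^{\prime\prime}\left(\frac{a+2b-x}{2}\right)\right\vert\right)$ reproduces verbatim the right-hand side of Corollary \ref{co 2.9}, while the middle term carrying $\left\vert f^{\prime\prime}\left(\frac{a+b}{2}\right)\right\vert$ is left untouched. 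Thus the two bounds are algebraically identical, and the corollary follows. The only point demanding any attention is verifying this regrouping, which is elementary; there is no genuine obstacle here, since all of the analytic work — the application of Lemma \ref{lem 1.1}, the H\"{o}lder estimate, and the use of the $s$-concave Hadamard inequality \eqref{1.1} — was already carried out in the proof of Theorem \ref{teo 2.3}.
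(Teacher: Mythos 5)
Your proposal is correct and matches the paper's intent exactly: Corollary \ref{co 2.9} is stated without proof precisely because it is the immediate specialization of Theorem \ref{teo 2.3} in which the hypothesis $f^{\prime}(x)=f^{\prime}(a+b-x)$ kills the derivative term on the left-hand side, and the right-hand side is only regrouped. Nothing further is needed.
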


\begin{corollary}
\label{co 2.10} In Corollary \ref{co 2.9}, if $f$ is symmetric function, $%
f(a+b-x)=f(x),$ we have 
\begin{eqnarray*}
&&\left\vert \frac{1}{b-a}\int_{a}^{b}f(t)dt-f(x)\right\vert  \\
&\leq &\frac{2^{\frac{s-1}{q}}}{2\left( b-a\right) \left( 2p+1\right) ^{%
\frac{1}{p}}}\left[ \left( x-a\right) ^{3}\left( \left\vert f^{\prime \prime
}\left( \frac{x+a}{2}\right) \right\vert +\left\vert f^{\prime \prime
}\left( \frac{a+2b-x}{2}\right) \right\vert \right) \right.  \\
&&\left. +\frac{\left( a+b-2x\right) ^{3}}{4}\left\vert f^{\prime \prime
}\left( \frac{a+b}{2}\right) \right\vert \right] ,
\end{eqnarray*}%
for all $x\in \left[ a,\frac{a+b}{2}\right] .$
\end{corollary}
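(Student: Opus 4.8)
The plan is to read this inequality off directly from Corollary \ref{co 2.9}, since the present statement only adds to its hypotheses the symmetry relation $f(a+b-x)=f(x)$. Corollary \ref{co 2.9} already delivers \emph{precisely} the right-hand side written here, and its left-hand side is $\left|\frac{1}{b-a}\int_{a}^{b}f(t)\,dt-\frac{1}{2}\left[f(x)+f(a+b-x)\right]\right|$, so the whole task is to simplify that left-hand side under symmetry. No further use of Lemma \ref{lem 1.1} or of the H\"{o}lder estimates is needed; in particular the derivative contribution has already been removed at the level of Corollary \ref{co 2.9}.

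The single step that does the work is the collapse of the averaged value: under $f(a+b-x)=f(x)$,
\begin{equation*}
\frac{1}{2}\left[f(x)+f(a+b-x)\right]=\frac{1}{2}\left[f(x)+f(x)\right]=f(x).
\end{equation*}
Inserting this into the left-hand side of Corollary \ref{co 2.9} replaces the quantity $\frac{1}{b-a}\int_{a}^{b}f(t)\,dt-\frac{1}{2}\left[f(x)+f(a+b-x)\right]$ by $\frac{1}{b-a}\int_{a}^{b}f(t)\,dt-f(x)$, while the right-hand side is carried over verbatim. This immediately yields the asserted bound for all $x\in\left[a,\frac{a+b}{2}\right]$.

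Since the argument is a one-line substitution there is no real obstacle; the only thing I would flag is a simplification I choose \emph{not} to perform. Differentiating $f(a+b-x)=f(x)$ twice gives $f''(a+b-x)=f''(x)$, and because $\frac{a+2b-x}{2}=a+b-\frac{x+a}{2}$ one has $\left|f''\!\left(\frac{a+2b-x}{2}\right)\right|=\left|f''\!\left(\frac{x+a}{2}\right)\right|$, so the right-hand side could in fact be condensed into a single second-derivative term. I would nonetheless leave it in the form inherited from Corollary \ref{co 2.9}, which is exactly how the statement is phrased, and simply confirm that the estimate remains valid in this unsimplified shape.
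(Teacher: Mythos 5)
Your proof is correct and is exactly the argument the paper intends (the paper gives no explicit proof, treating the corollary as an immediate substitution of $\frac{1}{2}[f(x)+f(a+b-x)]=f(x)$ into Corollary \ref{co 2.9}). Your side remark that symmetry would further collapse $\left\vert f''\left(\frac{a+2b-x}{2}\right)\right\vert$ to $\left\vert f''\left(\frac{x+a}{2}\right)\right\vert$ is also accurate, and you are right to leave the bound in the stated form.
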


\begin{corollary}
\label{c0 2.11} In Theorem \ref{teo 2.3}, if we choose $x=\frac{3a+b}{4}$ we
have%
\begin{eqnarray*}
&&\left\vert \frac{1}{b-a}\int_{a}^{b}f(t)dt-\frac{1}{2}\left[ f\left( \frac{%
3a+b}{4}\right) +f\left( \frac{a+3b}{4}\right) \right] \right\vert \\
&\leq &\frac{2^{\frac{s-1}{q}}\left( b-a\right) ^{2}}{128\left( 2p+1\right)
^{\frac{1}{p}}}\left[ \left\vert f^{\prime \prime }\left( \frac{7a+b}{8}%
\right) \right\vert +2\left\vert f^{\prime \prime }\left( \frac{a+b}{2}%
\right) \right\vert +\left\vert f^{\prime \prime }\left( \frac{a+7b}{8}%
\right) \right\vert \right] .
\end{eqnarray*}
\end{corollary}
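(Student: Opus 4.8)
The plan is to obtain this corollary as a direct specialization of Theorem \ref{teo 2.3}: I would substitute the particular value $x=\frac{3a+b}{4}$ into the inequality proved there and then simplify both sides. No new analytic input is required, since Theorem \ref{teo 2.3} already delivers the general bound under the hypothesis that $\left\vert f^{\prime \prime }\right\vert ^{q}$ is $s$-concave; the whole content of the corollary is the arithmetic bookkeeping of the substitution.

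First I would record the effect of the choice $x=\frac{3a+b}{4}$ on the left-hand side. The decisive observation is that the coefficient $x-\frac{3a+b}{4}$ multiplying the derivative difference $f^{\prime }(x)-f^{\prime }(a+b-x)$ becomes exactly $0$, so the entire term $\frac{1}{2}\left( x-\frac{3a+b}{4}\right) \left[ f^{\prime }(x)-f^{\prime }(a+b-x)\right]$ disappears. Combining this with the identity $a+b-x=\frac{a+3b}{4}$, the left-hand side collapses to $\left\vert \frac{1}{b-a}\int_{a}^{b}f(t)\,dt-\frac{1}{2}\left[ f\left( \frac{3a+b}{4}\right) +f\left( \frac{a+3b}{4}\right) \right] \right\vert$, which is precisely the quantity appearing in the corollary.

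Next I would evaluate the geometric quantities in the right-hand bound of Theorem \ref{teo 2.3}. With $x=\frac{3a+b}{4}$ one has $x-a=\frac{b-a}{4}$ and $a+b-2x=\frac{b-a}{2}$, hence $(x-a)^{3}=\frac{(b-a)^{3}}{64}$ and $\frac{(a+b-2x)^{3}}{4}=\frac{(b-a)^{3}}{32}$. The three points at which $f^{\prime \prime }$ is sampled become $\frac{x+a}{2}=\frac{7a+b}{8}$, the midpoint $\frac{a+b}{2}$ (unchanged), and $\frac{a+2b-x}{2}=\frac{a+7b}{8}$. Substituting these into the bound and factoring out the common $\frac{(b-a)^{3}}{64}$ gives $\frac{2^{(s-1)/q}}{2(b-a)(2p+1)^{1/p}}\cdot \frac{(b-a)^{3}}{64}\bigl[\,|f^{\prime \prime }(\tfrac{7a+b}{8})|+2\,|f^{\prime \prime }(\tfrac{a+b}{2})|+|f^{\prime \prime }(\tfrac{a+7b}{8})|\,\bigr]$, where the factor $2$ on the central term comes from $\frac{(b-a)^{3}/32}{(b-a)^{3}/64}=2$. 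Finally, consolidating the prefactor via $\frac{1}{2(b-a)}\cdot \frac{(b-a)^{3}}{64}=\frac{(b-a)^{2}}{128}$ yields exactly the right-hand side claimed.

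Since each step is an algebraic identity rather than an estimate, there is no genuine analytic obstacle. The only points needing care are confirming that the weight $\tfrac12$, the $\tfrac14$ attached to $(a+b-2x)^{3}$, and the denominator $b-a$ all collapse into the single constant $\frac{1}{128}$, and that the central $f^{\prime \prime }(\tfrac{a+b}{2})$ term acquires the coefficient $2$ while the two outer terms keep coefficient $1$. This completes the verification.
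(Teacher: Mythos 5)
Your proposal is correct and is exactly the argument the paper intends: the corollary is a direct substitution of $x=\frac{3a+b}{4}$ into Theorem \ref{teo 2.3}, and all of your evaluations ($x-a=\frac{b-a}{4}$, $a+b-2x=\frac{b-a}{2}$, the three sample points $\frac{7a+b}{8}$, $\frac{a+b}{2}$, $\frac{a+7b}{8}$, and the consolidated constant $\frac{1}{128}$ with the factor $2$ on the middle term) check out. The paper offers no separate proof for this corollary, so your bookkeeping supplies precisely what is implicit there.
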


\begin{remark}
\label{rem 2.2} IIn Theorem \ref{teo 2.3}, if we choose $x=\frac{a+b}{2},$
we have the first inequality in Corollary \ref{co 1.1}.
\end{remark}

\section{applications for special means}

We consider the means for nonnegative real numbers $\alpha <\beta $ as
follows:

(1)\textbf{\ }The arithmetic mean:%
\begin{equation*}
A\left( \alpha ,\beta \right) =\frac{\alpha +\beta }{2},\text{ \ \ \ \ }%
\alpha ,\beta \in 
\mathbb{R}
^{+}.
\end{equation*}

(2) The generalized logarithmic mean:%
\begin{equation*}
L_{n}\left( \alpha ,\beta \right) =\left[ \frac{\beta ^{n+1}-\alpha ^{n+1}}{%
\left( \beta -\alpha \right) \left( n+1\right) }\right] ^{\frac{1}{n}},\text{
\ \ \ \ }\alpha ,\beta \in 
\mathbb{R}
^{+},\text{ }\alpha \neq \beta ,\text{ }n\in 
\mathbb{Z}
\backslash \left\{ -1,0\right\} .
\end{equation*}

\begin{proposition}
\label{prop 3.1} Let $0<a<b$ and $0<s<1.$ Then we have%
\begin{equation*}
\left\vert L_{s}^{s}\left( a,b\right) -A^{s}\left( a,b\right) \right\vert
\leq \frac{\left( b-a\right) ^{2}s\left( s-1\right) }{8\left( s+1\right)
\left( s+2\right) \left( s+3\right) }\left[ a^{s-2}+\left( s^{2}+3s+2\right)
A^{s-2}\left( a,b\right) +b^{s-2}\right]
\end{equation*}%
and%
\begin{eqnarray*}
&&\left\vert L_{s}^{s}\left( a,b\right) -A\left( \left( \frac{3a+b}{4}%
\right) ^{s},\left( \frac{a+3b}{4}\right) ^{s}\right) \right\vert \\
&\leq &\frac{\left( b-a\right) ^{2}s\left( s-1\right) }{8\left( s+1\right)
\left( s+2\right) \left( s+3\right) }\left[ 2\left( a^{s-2}+b^{s-2}\right)
+\left( 3s^{2}+5s+6\right) \left( \left( \frac{3a+b}{4}\right) ^{s-2}+\left( 
\frac{a+3b}{4}\right) ^{s-2}\right) \right] .
\end{eqnarray*}
\end{proposition}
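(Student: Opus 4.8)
The plan is to specialize Corollary \ref{co 2.3} to the power function $f(x)=x^{s}$ on $[a,b]\subset(0,\infty)$, so that the left-hand sides collapse into the stated means while the right-hand side is expressed through $f''$. First I would record the derivatives $f'(x)=s\,x^{s-1}$ and $f''(x)=s(s-1)x^{s-2}$. Since $0<s<1$ we have $s(s-1)<0$, so $f''$ is negative and $\left\vert f''(x)\right\vert =s(1-s)x^{s-2}$; this is the positive quantity that actually appears in the bounds, so the factor written as $s(s-1)$ in the statement should be read as its magnitude $s(1-s)$.

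Before substituting I must check the hypothesis of Theorem \ref{teo 2.1}, namely that $\left\vert f''\right\vert $ is $s$-convex on $[a,b]$. The key observation is that $\left\vert f''(x)\right\vert =s(1-s)x^{s-2}$ is nonnegative and convex in the ordinary sense, because $x\mapsto x^{s-2}$ has positive second derivative $(s-2)(s-3)x^{s-4}>0$ on $(0,\infty)$. Any nonnegative convex function $g$ is automatically $s$-convex in the second sense: for $\alpha+\beta=1$ with $\alpha,\beta\ge 0$ one has $g(\alpha x+\beta y)\le \alpha g(x)+\beta g(y)\le \alpha^{s}g(x)+\beta^{s}g(y)$, the last step using $\alpha\le\alpha^{s}$, $\beta\le\beta^{s}$ for $s\in(0,1]$ together with $g\ge 0$. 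Hence $\left\vert f''\right\vert \in K_{s}^{2}$ and Corollary \ref{co 2.3} applies.

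Next I would translate each ingredient into means. By the definition of $L_{s}$,
\[
\frac{1}{b-a}\int_{a}^{b}t^{s}\,dt=\frac{b^{s+1}-a^{s+1}}{(b-a)(s+1)}=L_{s}^{s}(a,b),
\]
while $f\!\left(\tfrac{a+b}{2}\right)=A^{s}(a,b)$ and $\tfrac12\!\left[f\!\left(\tfrac{3a+b}{4}\right)+f\!\left(\tfrac{a+3b}{4}\right)\right]=A\!\left(\left(\tfrac{3a+b}{4}\right)^{s},\left(\tfrac{a+3b}{4}\right)^{s}\right)$. For the right-hand sides I substitute $\left\vert f''(a)\right\vert =s(1-s)a^{s-2}$, $\left\vert f''(b)\right\vert =s(1-s)b^{s-2}$, $\left\vert f''\!\left(\tfrac{a+b}{2}\right)\right\vert =s(1-s)A^{s-2}(a,b)$, and likewise at the quarter points. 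Plugging these into case (1) of Corollary \ref{co 2.3} yields the first claimed inequality, and into case (2) the second.

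The work here is essentially bookkeeping; the only genuinely conceptual step is the verification that $\left\vert f''\right\vert $ is $s$-convex, which I reduced above to the elementary fact that nonnegative convex functions lie in $K_{s}^{2}$. The remaining care is in pulling out the common factor $s(1-s)$ and in tracking the numerical constants produced by Corollary \ref{co 2.3} — in particular the $\tfrac{1}{8}$ and $\tfrac{1}{128}$ prefactors arising in its two cases — and matching them against the denominators recorded in the Proposition.
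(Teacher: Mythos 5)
Your proposal coincides with the paper's proof: the paper likewise obtains the Proposition by applying Corollary \ref{co 2.3} to $f(x)=x^{s}$, and your added verification that $\left\vert f''(x)\right\vert =s(1-s)x^{s-2}$ is $s$-convex (being nonnegative and convex, hence in $K_{s}^{2}$) supplies a detail the paper leaves implicit. Your bookkeeping is also correct that the common factor should read $s(1-s)$ and that case (2) of Corollary \ref{co 2.3} produces a $\tfrac{1}{128}$ prefactor, so the $s(s-1)$ and the $8$ in the second displayed denominator of the Proposition are misprints in the statement rather than defects of your argument.
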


\begin{proof}
The assertion follows from Corollary \ref{co 2.3} applied to the $s-$convex
mapping $f:[0,1]\rightarrow \lbrack 0,1],$ $f(x)=x^{s}.$
\end{proof}

\begin{proposition}
\label{prop 3.2} Let $0<a<b$ and $0<s<1.$ Then for all $q>1,$ we have%
\begin{eqnarray*}
\left\vert L_{s}^{s}\left( a,b\right) -A^{s}\left( a,b\right) \right\vert
&\leq &\frac{\left( b-a\right) ^{2}s\left( s-1\right) }{16\left( s+1\right)
^{\frac{1}{q}}\left( 2p+1\right) ^{\frac{1}{p}}}\left[ \left( a^{\left(
s-2\right) q}+A^{\left( s-2\right) q}\left( a,b\right) \right) ^{\frac{1}{q}%
}\right. \\
&&\left. +\left( \left( A^{\left( s-2\right) q}\left( a,b\right) +b^{\left(
s-2\right) q}\right) ^{\frac{1}{q}}\right) \right]
\end{eqnarray*}%
where $\frac{1}{p}+\frac{1}{q}=1.$
\end{proposition}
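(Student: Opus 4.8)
The plan is to specialize Corollary \ref{co 2.7}(1) to the mapping $f(x)=x^{s}$ on $[a,b]$, in the same spirit in which Proposition \ref{prop 3.1} specializes Corollary \ref{co 2.3}. First I would check that $f$ meets the hypotheses of Theorem \ref{teo 2.2}. Since $f'(x)=sx^{s-1}$ is absolutely continuous and $f''(x)=s(s-1)x^{s-2}\in L_{1}[a,b]$ (here $0<a<b$ keeps us away from the singularity at $0$), the only nontrivial point is that $\left\vert f''\right\vert^{q}$ is $s$-convex on $[a,b]$. One has $\left\vert f''(x)\right\vert^{q}=[s(1-s)]^{q}x^{(s-2)q}$, a positive multiple of $x^{(s-2)q}$; because the exponent $(s-2)q$ is negative, its second derivative $(s-2)q\bigl((s-2)q-1\bigr)x^{(s-2)q-2}$ is positive, so the function is convex and nonnegative on $(0,\infty)$. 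A nonnegative convex function is $s$-convex in the second sense for every $s\in(0,1]$, since $\alpha^{s}\geq\alpha$ and $\beta^{s}\geq\beta$ whenever $\alpha,\beta\in[0,1]$; hence the corollary applies.

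Next I would identify the two sides of the inequality. For $f(t)=t^{s}$ the elementary integration
\[
\frac{1}{b-a}\int_{a}^{b}t^{s}\,dt=\frac{b^{s+1}-a^{s+1}}{(b-a)(s+1)}=L_{s}^{s}(a,b)
\]
reproduces $L_{s}^{s}(a,b)$ exactly, by the definition of the generalized logarithmic mean, while $f\!\left(\frac{a+b}{2}\right)=\left(\frac{a+b}{2}\right)^{s}=A^{s}(a,b)$. Thus the left-hand side of Corollary \ref{co 2.7}(1) becomes precisely $\left\vert L_{s}^{s}(a,b)-A^{s}(a,b)\right\vert$, matching the statement.

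Finally I would compute the right-hand side. Using $\left\vert f''(x)\right\vert=s(1-s)x^{s-2}$, I evaluate $\left\vert f''(a)\right\vert^{q}=[s(1-s)]^{q}a^{(s-2)q}$, $\left\vert f''(b)\right\vert^{q}=[s(1-s)]^{q}b^{(s-2)q}$ and $\left\vert f''\!\left(\frac{a+b}{2}\right)\right\vert^{q}=[s(1-s)]^{q}A^{(s-2)q}(a,b)$. Pulling the common factor $s(1-s)$ out of each $q$-th root reduces the bracket of Corollary \ref{co 2.7}(1) to $s(1-s)$ times $\left(a^{(s-2)q}+A^{(s-2)q}(a,b)\right)^{1/q}+\left(A^{(s-2)q}(a,b)+b^{(s-2)q}\right)^{1/q}$, which is the claimed bound. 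The computation is routine bookkeeping rather than a genuine obstacle; the one point to watch is the sign, for $s(s-1)<0$ when $0<s<1$, so the factor written $s(s-1)$ in the statement must be understood as its modulus $s(1-s)$, which is what $\left\vert f''\right\vert$ actually produces.
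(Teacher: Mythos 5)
Your proposal is correct and takes essentially the same route as the paper, whose proof is the one--line observation that the first inequality of Corollary \ref{co 2.7} applied to $f(x)=x^{s}$ gives the result; you merely supply the hypothesis verification and bookkeeping that the paper leaves implicit. Your closing remark is also right: since $f''(x)=s(s-1)x^{s-2}<0$ for $0<s<1$, the factor written $s(s-1)$ in the statement should be read as $s(1-s)$, the quantity actually produced by $\left\vert f''\right\vert$.
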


\begin{proof}
The assertion follows from first inequality in Corollary \ref{co 2.7}
applied to the $s-$convex mapping $f:[0,1]\rightarrow \lbrack 0,1],$ $%
f(x)=x^{s}.$
\end{proof}

\begin{proposition}
\label{prop 3.3} Let $0<a<b$ and $0<s<1.$ Then for all $q>1,$ we have%
\begin{equation*}
\left\vert L_{s}^{s}\left( a,b\right) -A\left( a^{s},b^{s}\right)
\right\vert \leq \frac{\left( b-a\right) ^{2}s\left( s-1\right) }{8\left(
s+1\right) ^{\frac{1}{q}}\left( 2p+1\right) ^{\frac{1}{p}}}\left[ a^{\left(
s-2\right) q}+b^{\left( s-2\right) q}\right] ^{\frac{1}{q}},
\end{equation*}%
where $\frac{1}{p}+\frac{1}{q}=1.$
\end{proposition}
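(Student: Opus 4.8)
The plan is to specialize Corollary \ref{co 2.8} to the power mapping $f(x)=x^{s}$ on $[a,b]\subset(0,\infty)$, exactly in the spirit of the proofs of Propositions \ref{prop 3.1} and \ref{prop 3.2}. First I would match the left-hand side: for $f(x)=x^{s}$ the definitions of the two means give
\[
\frac{1}{b-a}\int_{a}^{b}t^{s}\,dt=\frac{b^{s+1}-a^{s+1}}{(s+1)(b-a)}=L_{s}^{s}(a,b),\qquad \frac{f(a)+f(b)}{2}=\frac{a^{s}+b^{s}}{2}=A\!\left(a^{s},b^{s}\right),
\]
so the quantity estimated by Corollary \ref{co 2.8} is precisely $\left|L_{s}^{s}(a,b)-A(a^{s},b^{s})\right|$.

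For the right-hand side I would compute $f''(x)=s(s-1)x^{s-2}$; since $0<s<1$ gives $s-1<0$, we have $|f''(x)|=s(1-s)x^{s-2}$ and hence
\[
\left[\,|f''(a)|^{q}+|f''(b)|^{q}\,\right]^{1/q}=s(1-s)\left[\,a^{(s-2)q}+b^{(s-2)q}\,\right]^{1/q}.
\]
Inserting the two mean identities and this bracket into Corollary \ref{co 2.8} and pulling the factor $s(1-s)$ outside then reproduces the stated estimate, the constant being written $s(s-1)$ in the proposition only because $|s(s-1)|=s(1-s)$.

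The only step needing genuine justification is that the hypotheses of Corollary \ref{co 2.8} are met, i.e.\ that $|f''|^{q}$ is $s$-convex on $[a,b]$; the requirements that $f'$ be absolutely continuous and $f''\in L_{1}[a,b]$ are clear since $[a,b]$ avoids the origin. Here $|f''(x)|^{q}=\bigl(s(1-s)\bigr)^{q}x^{(s-2)q}$, a nonnegative constant times the power function $x^{(s-2)q}$. The exponent $(s-2)q$ is negative and so is $(s-2)q-1$, so $\bigl(x^{(s-2)q}\bigr)''=(s-2)q\bigl((s-2)q-1\bigr)x^{(s-2)q-2}>0$ on $(0,\infty)$; thus $|f''|^{q}$ is convex and nonnegative. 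Since for a nonnegative function convexity implies $s$-convexity in the second sense for every $s\in(0,1]$ (because $\lambda\le\lambda^{s}$ and $1-\lambda\le(1-\lambda)^{s}$ when $\lambda\in[0,1]$), the hypothesis holds and the proposition follows at once. I expect this convexity-implies-$s$-convexity reduction to be the only non-routine ingredient; everything else is the elementary integral and derivative computation above.
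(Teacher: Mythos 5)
Your proof is correct and follows the same route as the paper, which simply cites Corollary \ref{co 2.8} applied to $f(x)=x^{s}$; you additionally verify the hypothesis the paper glosses over, namely that it is $\left\vert f''\right\vert^{q}$ (not $f$ itself) that must be $s$-convex, which you obtain from nonnegativity together with convexity of $x\mapsto x^{(s-2)q}$ on $[a,b]\subset(0,\infty)$. Your reading of the constant $s(s-1)$ as $s(1-s)=\left\vert s(s-1)\right\vert$ is also the correct interpretation of what is evidently a sign typo in the stated proposition.
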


\begin{proof}
The assertion follows from first inequality in Corollary \ref{co 2.8}
applied to the $s-$convex mapping $f:[0,1]\rightarrow \lbrack 0,1],$ $%
f(x)=x^{s}.$
\end{proof}

\end{document}